\newtheorem{theorem}{Theorem}[section]
\newtheorem{proposition}[theorem]{Proposition}
\newtheorem{corollary}[theorem]{Corollary}
\theoremstyle{definition}
\newtheorem{definition}[theorem]{Definition}
\newtheorem{example}[theorem]{Example}
\newtheorem{question}[theorem]{Question}
\newtheorem{remark}[theorem]{Remark}
\DeclareMathOperator{\m}{\mathbf m}
\DeclareMathOperator{\n}{\mathbf n}
\DeclareMathOperator{\gr}{gr}
\DeclareMathOperator{\Ass}{Ass }
\DeclareMathOperator{\ord}{ord }
\DeclareMathOperator{\goto}{goto}
\begin{document}

\title[The Goto numbers of parameter ideals]{The Goto numbers of parameter ideals}

\author[William Heinzer]{William Heinzer}
\address{Department of Mathematics,   Purdue University, West Lafayette, IN 47907 }
\curraddr{} \email{heinzer@math.purdue.edu}
\thanks{}

%    author two information
\author[Irena Swanson]{Irena Swanson}
\address{Department of Mathematics, Reed College, Portland, OR 97202 }
\curraddr{} \email{iswanson@reed.edu}
\thanks{}

\subjclass[2000]{Primary 13A15}

%\date{\today}

\dedicatory{}

\begin{abstract} Let $Q$ be a parameter ideal of a Noetherian local
ring $(R,\m)$. The Goto number $g(Q)$ of $Q$ is the  largest integer
$g$ such that $Q:\m^g$ is integral over $Q$. We examine the values
of $g(Q)$ as $Q$ varies over the parameter ideals of $R$. We
concentrate mainly on the case where $\dim R = 1$, and many of our
results concern parameter ideals of a numerical semigroup ring.
\end{abstract}

\maketitle

\baselineskip 17 pt
\section{Introduction}
 This note started from the group work at the
workshop ``Integral closure, multiplier ideals, and cores" that took
place at the American Institute of Mathematics (AIM) in Palo Alto,
California, in December 2006. Shiro Goto presented the background,
motivation, and some intriguing open questions.

Recall that if $(R,\m)$ is a Noetherian local ring with $\dim R =
d$, then an $\m$-primary ideal $Q$ is called a {\it parameter ideal}
if $Q$ is generated by $d$ elements.

A motivating result for the group work at AIM is:

\begin{theorem} {\em
(Corso, Huneke, Vasconcelos \cite{CHV}, Corso, Polini \cite{CP},
Corso, Polini, Vasconcelos \cite{CPV}, Goto \cite{G})} \label{1.1}
Let $(R,\m)$ be a Cohen-Macaulay local ring  of positive dimension.
Let $Q$ be a parameter ideal in $R$ and let $I = Q : \m$. Then the
following are equivalent:
\begin{enumerate}
\item $I^2 \not = Q I$.
\item  The integral closure of $Q$ is $Q$.
\item $R$ is a regular local ring and $\mu(\m/Q) \le 1$.
\end{enumerate}
\end{theorem}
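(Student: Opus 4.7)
The approach is to run the cycle $(3) \Rightarrow (2) \Rightarrow (1) \Rightarrow (3)$, with essentially all the content concentrated in the last implication. A preliminary observation used throughout: since $R$ is Cohen-Macaulay of positive dimension and $Q$ is $\m$-primary, $R/Q$ is a nonzero artinian Cohen-Macaulay local ring, so $\m \in \Ass(R/Q)$, which forces $I \supsetneq Q$; moreover, when $Q \neq \m$ one has $I \subseteq \m$, and $I^2 \subseteq \m I \subseteq Q$ follows directly from the definition of $I$.

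The implications $(2) \Rightarrow (1)$ and $(3) \Rightarrow (2)$ are the easy halves. For $(2) \Rightarrow (1)$ I argue contrapositively: if $I^2 = QI$ then $Q$ is a reduction of $I$ with reduction number at most one, so $I$ is integral over $Q$, hence $I \subseteq \overline{Q} = Q$, contradicting $I \supsetneq Q$. For $(3) \Rightarrow (2)$, some linear algebra in $\m/\m^2$ lets one pick a regular system of parameters $x_1, \ldots, x_d$ of $R$ so that $Q = (x_1, \ldots, x_{d-1}, x_d^n)$ for some $n \ge 1$; reducing an integral-dependence relation for any $z \in \overline{Q}$ modulo the prime $(x_1, \ldots, x_{d-1})$ and invoking the DVR structure of the quotient then forces $z \in Q$.

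The main work is in $(1) \Rightarrow (3)$. My plan is first to reduce to $\dim R = 1$ by killing a regular sequence of length $d-1$ drawn from a minimal generating set of $Q$, using the Cohen-Macaulay hypothesis to track $I$ and the module $I^2/QI$ through the reduction. In dimension one, with $Q = (q)$ principal and $I = (q, y_1, \ldots, y_s)$ where the $y_i$ represent an $R/\m$-basis of the socle $I/Q$, the inclusion $I^2 \subseteq Q$ yields expressions $y_i y_j = q\, r_{ij}$, and the hypothesis $I^2 \neq QI$ produces some $r_{ij} \notin I$. Combining this with the Cohen-Macaulay identity $e(Q) = \lambda(R/Q) = \lambda(R/I) + s$ and a careful length analysis of the colon filtration $Q \subseteq Q\!:\!\m \subseteq Q\!:\!\m^2 \subseteq \cdots$ should force first $s = 1$ (so that $R$ is Gorenstein) and then $e(R) = 1$ (so that $R$ is regular), from which $\mu(\m/Q) \le 1$ can be read off the resulting structure.

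The main obstacle is precisely this rigidity step in $(1) \Rightarrow (3)$: the hypothesis $I^2 \ne QI$ is a priori very mild --- it merely says a single product $y_iy_j$ fails to lie in $QI$ --- yet it must be amplified to simultaneously pin down both the type $s$ and the multiplicity $e(R)$ to the value $1$. Extracting that much rigid numerical information from the non-vanishing of one socle product, and doing so while also controlling how $Q$ sits inside $\m$, is the technically delicate heart of the proof.
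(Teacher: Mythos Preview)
The paper does not give its own proof of Theorem~\ref{1.1}; the result is quoted from the cited references \cite{CHV}, \cite{CP}, \cite{CPV}, \cite{G} as background and motivation, and the text moves immediately to its consequences. There is therefore no in-paper argument to compare your proposal against.

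As a brief standalone assessment: your implications $(3)\Rightarrow(2)$ and $(2)\Rightarrow(1)$ are correct and standard. Your outline for $(1)\Rightarrow(3)$ follows the shape of the arguments in the cited literature (reduce to dimension one by going modulo a regular sequence drawn from $Q$, then exploit the socle structure together with a multiplicity/length count), but, as you yourself flag, the rigidity step---leveraging the non-vanishing of a single product $y_iy_j$ modulo $QI$ into the conclusions $s=1$ and $e(R)=1$---remains a plan rather than an argument. That step genuinely requires something like the explicit matrix/determinantal manipulation in \cite{CHV} or Goto's direct computation in \cite{G}, and nothing in your sketch yet supplies it.
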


Consequently, if $(R,\m)$ is a Cohen-Macaulay local ring that is not
regular, then $I^2 = QI$. If $\dim R > 1$, it follows that the Rees
algebra $R[It]$ is a Cohen-Macaulay ring,  and even without the
assumption that $\dim R > 1$, the fact that $I^2 = QI$ implies that
the associated graded ring $\gr_I(R) = R[It]/IR[It]$ and the fiber
ring $R[It]/\m R[It]$ are both Cohen-Macaulay.

In \cite{GMT}, Goto, Matsuoka, and Takahashi explore  the
Cohen-Macaulayness and Buchsbaumness of the associated graded and
fiber rings and Rees algebras for ideals $I = Q : \m^2$ under the
condition that $I^3 = QI^2$. They also give  examples showing that
Cohen-Macaulayness does not always hold. Notice that the condition
$I^3 = QI^2$ implies that $I$ is integral over $Q$, so $I \subseteq
\overline Q$, where $\overline Q$ denotes the integral closure of
$Q$ \cite[Corollary 1.1.8]{SHbk}.

It seems that a natural  next step would be to explore the
Cohen-Macaulay property for the various ring constructs from the
ideal $I = Q : \m^3$. We expect the necessity of even further
restrictions on $R$ and $I$. However, rather than examining each of
$I = Q : \m^i$ for increasing $i$ in turn, we pass to examining $I =
Q : \m^g$, where $g$ is the greatest integer such that $Q : \m^g$ is
integral over $Q$. Because of the pioneering  work Shiro Goto has
done in this area we define the  Goto number  of a parameter ideal
$Q$ as follows:

\begin{definition} Let $Q$ be a parameter ideal of the Noetherian
local ring $(R,\m)$. The largest integer $g$ such that $Q:\m^g$ is
integral over $Q$ is denoted $g(Q)$ and called the {\it Goto number
} of $Q$.  In the case where $\dim R = 1$ and $Q = xR$, we sometimes
write $g(x)$ instead of $g(Q)$.
\end{definition}

Notice  that the Goto number $g(Q)$ is well defined, for  $Q : \m^0
= Q : R = Q$ is integral over $Q$, and for sufficiently large $n$,
$\m^n \subseteq Q$, so $Q : \m^n = R$, which is not integral over
$Q$.

During the workshop we concentrated on various invariants, dubbed
``Goto invariants of a Noetherian local ring $(R,\m)$", that involve
the Goto numbers of parameter ideals.  These invariants are
discussed in Section \ref{highdim}. During our subsequent work, we
decided that the set
$$
\mathcal G(R) =  \{g(Q) ~|~ Q \mbox{ is a parameter ideal of } R \},
$$ where $R$ is a
fixed  one-dimensional Noetherian local ring is a possibly more
interesting invariant. Most of the paper has to do with an
examination of the integers that are in $\mathcal G(R)$.  In the
case where $(R,\m)$ is an arbitrary one-dimensional Noetherian local
ring,   we prove  the existence of a positive integer $n$ such that
every parameter ideal contained in $\m^n$ has Goto number the
minimal integer in $\mathcal G(R)$.  With additional hypothesis on
$R$, we prove that the set $\mathcal G(R)$ is finite.

Our notation is mainly as in \cite{SHbk}. In particular, we use
$\overline{R}$ to denote the integral closure of the ring $R$,  and
$\overline{J}$ to denote the integral closure of the ideal $J$ of
$R$. For many of the examples in the paper, the calculations were
done using the symbolic computer algebra system Macaulay2 \cite{GS}.

For much of the paper we focus on a special type of one-dimensional
Noetherian local domain.  As in the monograph of J\"urgen Herzog and
Ernst Kunz \cite{HK}, we consider a rank-one discrete valuation
domain $V$ with field of fractions  $K$ and let $v : K \setminus
\{0\} \to \mathbb Z$ denote the normalized valuation associated to
$V$. Thus if $x \in V$ generates  the maximal ideal of
$V$, then $v(x) = 1$.  Associated with each subring $R$ of $V$ is
a subsemigroup $G(R) = \{ v(r) ~ | ~ r \in R \setminus \{0\}~ \}$
of the additive semigroup
$\mathbb N_0$ of nonnegative integers. $G(R)$ is the {\it value semigroup} of
$R$ with respect to $V$.

\begin{definition}\label{1.3}  A subring $R$ of $V$ is called a {\it numerical semigroup ring
associated to } $V$  if it satisfies the following properties:
\begin{enumerate}
\item $R$ has field of fractions $K$ and the integral closure of $R$ is
$V$.
\item $V$ is a finitely generated $R$-module.
\item There exists $x \in V$ with $v(x) = 1$ such that $x^n \in R$ for
each integer $n$ such that $n = v(r)$ for some $r \in R$, and if
$\m = xV \cap R$,
then the canonical injection $R/\m  \hookrightarrow V/xV$ is
an isomorphism.
\end{enumerate}
The value semigroup $G(R)$ is the {\it numerical semigroup
associated to}  $R$

\end{definition}

\begin{remark} \label{1.4}
Let  $R$ be a numerical semigroup ring associated to the valuation
domain $V$ as in (\ref{1.3}).  We then have the following.
\begin{enumerate}
\item  Since $V$ is a finitely generated $R$-module, $R$
is a one-dimensional Noetherian local domain with maximal ideal $\m$ \cite[Theorem 3.7]{Mat}.
\item  Since the conductor \cite[page 234]{SHbk}  of $R$ in $V$ is nonzero,
the value semigroup   $ G(R)  = \left\{ v(r) ~ | ~ r \in R
\setminus\{0\} \right\} $ contains all sufficiently large integers.
The largest integer $f$ that is not in $G(R)$ is called the {\it
Frobenius number} of $R$, and $C := x^{f+1}V$ is the conductor of
$R$ in $V$.
\item If $0 < a_1 < a_2 < \cdots < a_d$ are elements of $G(R)$ that
generate $G(R)$, then $\m = (x^{a_1}, x^{a_2}, \ldots, x^{a_d})R$.
\item An application of Nakayama's lemma  \cite[Theorem 2.2]{Mat}
implies $R[x] = V$.
\item If $u$ is a unit of $V$, then $R/\m = V/xV$ implies there
exists a unit $u_0$ of $R$ such that $u - u_0 \in xV$. If $u \ne
u_0$, there exists a positive integer $i$ such that $u - u_0 =
wx^i$, where $w$ is a unit of $V$. Repeating the above process on
$w$, we see that every unit $u$ of $V$ has the form
$$
u = u_0 + u_1x + \cdots + u_fx^f + \alpha,
$$
where $\alpha \in C$, $u_0$ is a unit of $R$, and each $u_i, ~ 1 \le
i \le f$,  is either zero or a unit of $R$.

\item
Every nonzero element $r \in R$ has the form $r = u x^b$ for some $b
\in G$ and some unit $u \in V$. Multiplying $u$ by a unit in $R$ and
using item ~ (5),  we see that every nonzero principal ideal of $R$
has the form $ux^bR$, where
$$
u = 1 + u_1x + u_2x^2 + \cdots + u_fx^f + \alpha,
$$
where $\alpha \in C$ and each $u_i$ is either zero or a unit of $R$.
Thus
$$
ux^b = (1 + u_1x + u_2x^2 + \cdots + u_fx^f)x^b + \alpha x^b.
$$
Since $ux^b \in R$, it follows that $b + i \in G$ for each $i$ such
that $u_i \ne 0$. Also $\alpha \in C$ implies $\alpha = u \beta$,
where $\beta \in C$. Thus
$$
ux^b - \alpha x^b = ux^b - u\beta x^b = ux^b(1-\beta).
$$
Since $1-\beta$ is a unit of $R$, we conclude that each nonzero
principal ideal of $R$ has the form $(1 + u_1x + \cdots +
u_fx^f)x^bR$, where $b \in G$, each $u_i$ is either zero or a unit
of $R$, and if $u_i \ne 0$, then $b + i \in G$.

\item
With $r = ux^b$, if we pass to integral closure, we have
$$
\overline{(r)} = \overline{(r) V} \cap R = \overline{(x^b) V} \cap R
= (x^e : e \in G, e \ge b)R.
$$

\end{enumerate}
\end{remark}

\begin{remark} \label{1.5}
With additional assumptions about the rank-one discrete valuation
domain $V$ it is possible to realize numerical semigroup rings by
starting with the group. Let $k$ be a field and let $x$ be an
indeterminate over $k$. If $V$ is either the formal power series
ring $k[[x]]$ or the localization of the polynomial ring $k[x]$ at
the maximal ideal generated by $x$, then for each subsemigroup $G$
of $\mathbb N_0$ that contains all sufficiently large positive
integers, there exists a numerical semigroup ring $R$ associated to
$V$ such that $G(R) = G$. In each case one takes generators $a_1,
\ldots, a_d$ for $G$. If $V$ is the formal power series ring
$k[[x]]$, then $R = k[[x^{a_1}, \ldots, x^{a_d}]]$ is the  subring
of $k[[x]]$ generated by all power series in $x^{a_1}, \ldots,
x^{a_d}$, while if $V$ is  $k[x]$ localized at the maximal ideal
generated by $x$, then $R$ is $k[x^{a_1}, \ldots, x^{a_n}]$
localized at the maximal ideal generated by $x^{a_1}, \ldots,
x^{a_d}$.
\end{remark}

We observe in Proposition \ref{1.6} a useful result for computing
Goto numbers of parameter ideals in dimension one.

\begin{proposition} \label{1.6} Let $Q_1$ and $Q_2$ be ideals of
a Noetherian local ring  $(R,\m)$. Assume that $Q_2$ is not
contained in any minimal prime of $R$. If $e$ is a positive integer
such that $Q_1:\m^e$ is not integral over $Q_1$, then $Q_1Q_2:\m^e$
is not integral over $Q_1Q_2$.
\end{proposition}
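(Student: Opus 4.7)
The plan is to exhibit a specific element of $Q_1Q_2:\m^e$ that is not in $\overline{Q_1Q_2}$. By hypothesis, pick $a \in Q_1:\m^e$ with $a \notin \overline{Q_1}$. The natural candidate is $ab$ for a suitable $b \in Q_2$: since $ab\m^e \subseteq bQ_1 \subseteq Q_1Q_2$, the containment $ab \in Q_1Q_2:\m^e$ is automatic for any $b\in Q_2$, so the real task is to choose $b$ so that $ab \notin \overline{Q_1Q_2}$.

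To choose $b$ I would invoke the valuative criterion for integral closure of ideals in a Noetherian ring (see \cite[Theorem 6.8.3]{SHbk}): the fact that $a \notin \overline{Q_1}$ gives a minimal prime $P$ of $R$ and a valuation domain $V$ with $R/P \hookrightarrow V \subseteq K(R/P)$, carrying a valuation $v$, such that $v(\bar a) < v(Q_1V)$, where $\bar a$ denotes the image of $a$ in $R/P$ (and hence in $V$).

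Next I would use the hypothesis that $Q_2$ is not contained in any minimal prime, so in particular $Q_2 \not\subseteq P$. This forces $Q_2V$ to be a nonzero ideal of $V$. Because $R$ is Noetherian, $Q_2V$ is finitely generated, and a finitely generated ideal in a valuation domain is principal; hence I can select $b \in Q_2$ whose image in $V$ realizes the infimum $v(\bar b) = v(Q_2V)$.

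With this $b$, multiplicativity of $v$ and the standard identity $Q_1V\cdot Q_2V = (Q_1Q_2)V$ in the valuation domain give
\[
v(\overline{ab}) \;=\; v(\bar a)+v(\bar b) \;<\; v(Q_1V)+v(Q_2V) \;=\; v\bigl((Q_1Q_2)V\bigr),
\]
so by the valuative criterion $ab \notin \overline{Q_1Q_2}$, finishing the proof. The only delicate point is in the second paragraph, where one must apply the valuative criterion in the non-domain Noetherian setting by passing to a quotient by a minimal prime; the hypothesis on $Q_2$ is exactly what is needed to guarantee that this reduction does not collapse $Q_2V$ to zero.
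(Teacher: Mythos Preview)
Your proof is correct and follows essentially the same approach as the paper's: both pick $a \in (Q_1:\m^e) \setminus \overline{Q_1}$ and produce $b \in Q_2$ with $ab \in (Q_1Q_2:\m^e) \setminus \overline{Q_1Q_2}$. The paper reduces to the domain case and then invokes the cancellation result \cite[Corollary~6.8.7]{SHbk} as a black box, whereas you unpack that corollary by carrying out the valuative argument directly; the content is the same.
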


\begin{proof}  It suffices to check integral closure modulo each
minimal prime ideal, so we may assume that $R$ is an integral domain
 \cite[Proposition~1.1.5]{SHbk}.
Let $x \in Q_1 : \m^e$. Then $x Q_2 \subseteq Q_1 Q_2 : \m^e$. If
all the elements in $x Q_2$ are integral over $Q_1 Q_2$, then
\cite[Corollary~6.8.7]{SHbk} implies that $x$ is integral over
$Q_1$.
\end{proof}

In dimension one, the product of two parameter ideals is again a
parameter ideal. Thus  Proposition \ref{1.6} has the following
immediate corollary.

\begin{corollary} \label{1.7}  Let $(R,\m)$ be a one-dimensional
Noetherian local ring. If $Q_1$ and $Q_2$ are parameter ideals of
$R$, then $g(Q_1Q_2) \le \min\{g(Q_1), g(Q_2) \}$.
\end{corollary}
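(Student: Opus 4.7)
The plan is to derive this as a direct consequence of Proposition \ref{1.6}. The key setup is to check that a parameter ideal in a one-dimensional local ring is never contained in a minimal prime of $R$: this is because such an ideal is $\m$-primary, and any minimal prime has dimension equal to $\dim R = 1$, so cannot contain an $\m$-primary ideal properly. Moreover, as noted just before the corollary, $Q_1Q_2$ is again a parameter ideal in dimension one (if $Q_i = x_iR$ then $Q_1Q_2 = x_1x_2 R$, and the product of two elements avoiding every minimal prime again avoids every minimal prime). Thus $g(Q_1Q_2)$ is defined.

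Next, I would argue by contrapositive using the definition of the Goto number. Set $e = g(Q_1) + 1$. By the very definition of $g(Q_1)$, the ideal $Q_1 : \m^e$ fails to be integral over $Q_1$. Since $Q_2$ is a parameter ideal, it is not contained in any minimal prime of $R$, so Proposition \ref{1.6} applies and yields that $Q_1Q_2 : \m^e$ is not integral over $Q_1Q_2$. Consequently $g(Q_1Q_2) < e$, i.e.\ $g(Q_1Q_2) \le g(Q_1)$. Swapping the roles of $Q_1$ and $Q_2$ and repeating the same argument with $e = g(Q_2) + 1$ gives $g(Q_1Q_2) \le g(Q_2)$. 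Taking the minimum of the two bounds yields the claim.

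There is essentially no obstacle here: the only verifications are (a) that a parameter ideal in a one-dimensional Noetherian local ring meets the hypothesis of Proposition \ref{1.6}, and (b) that the product of two parameter ideals is again a parameter ideal in dimension one, both of which are immediate. The argument is a clean two-line invocation of Proposition \ref{1.6}.
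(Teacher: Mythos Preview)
Your proof is correct and follows exactly the approach the paper takes: the paper simply notes that in dimension one the product of two parameter ideals is again a parameter ideal and declares Corollary~\ref{1.7} an immediate consequence of Proposition~\ref{1.6}. Your write-up just makes explicit the contrapositive step with $e = g(Q_i)+1$ and the reason a parameter ideal avoids all minimal primes, which the paper leaves implicit.
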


A strict inequality may hold in Corollary \ref{1.7} as we illustrate
in Example \ref{1.8}.

\begin{example} \label{1.8} Let $G = \langle 3, 5 \rangle$ be the
numerical subsemigroup of $\mathbb N_0$ generated by $3$ and $5$,
and let $R$ as in Remark \ref{1.5} be a numerical semigroup ring
such that $G(R) = \langle 3, 5 \rangle$. A direct computation shows
that the parameter ideal $Q = x^5R$ has Goto number $g(x^5) = 3$,
while $Q^2 = x^{10}R$ has the property that $x^9 \in x^{10}R:\m^3$.
Therefore $x^{10}R:\m^3$ is not integral over $x^{10}R$ and
$g(x^{10}) = 2$.

\end{example}

The Goto numbers of parameter ideals of a Gorenstein local ring  may
be described using duality as in Proposition~\ref{1.9}.

\begin{proposition} \label{1.9}  Let $Q$ be a parameter ideal of a
Gorenstein local ring $(R,\m)$. Assume that $Q \subsetneq
\overline{Q}$.
%, i.e., the equivalent conditions of Theorem~\ref{1.1} do not hold.
Let $J = Q:\overline{Q}$. Then
$$
g(Q) = \max\{i ~| ~ J \subseteq \m^i + ~  Q \}.
$$

\end{proposition}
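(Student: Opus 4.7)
The plan is to derive the formula as a formal consequence of Gorenstein duality on the parameter quotient $R/Q$. Since $R$ is Gorenstein and $Q$ is a parameter ideal, the ring $R/Q$ is a zero-dimensional Gorenstein ring, hence self-injective. Translating back to $R$, this says that for every ideal $I$ of $R$ with $Q \subseteq I$ one has $Q : (Q : I) = I$; in particular $I \mapsto Q : I$ is an inclusion-reversing involution on the lattice of ideals that contain $Q$.

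The first step is to restate $g(Q)$ as a containment. Since $Q \subseteq Q : \m^g$ automatically, the condition ``$Q : \m^g$ is integral over $Q$'' is equivalent to $Q : \m^g \subseteq \overline{Q}$, so $g(Q) = \max\{g \ge 0 : Q : \m^g \subseteq \overline{Q}\}$.

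The second step is to dualize this inclusion. Since $Q \subseteq \overline{Q}$, duality gives $Q : J = Q : (Q : \overline{Q}) = \overline{Q}$. Likewise $Q : \m^g = Q : (\m^g + Q)$, and because $Q \subseteq \m^g + Q$, duality gives $Q : (Q : \m^g) = \m^g + Q$. Applying the inclusion-reversing involution to the containment $Q : \m^g \subseteq Q : J$ therefore converts it into $J \subseteq \m^g + Q$. Taking the supremum over $g$ yields the desired formula.

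The only genuinely non-formal ingredient is Gorenstein duality; everything else is repeated use of $Q:(Q:I)=I$ and the fact that, on ideals containing $Q$, the colon with $Q$ reverses inclusions in both directions. The hypothesis $Q \subsetneq \overline{Q}$ is what makes the whole statement meaningful: it ensures $J \subsetneq R$ (so that $J \subseteq \m^i + Q$ fails for large $i$ and the maximum on the right-hand side is finite), which matches the fact that $g(Q)$ is finite because $Q$ is $\m$-primary and $\overline{Q} \subseteq \m$. No real obstacle is anticipated beyond ensuring that each colon manipulation is applied to an ideal actually containing $Q$.
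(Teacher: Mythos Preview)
Your proof is correct and follows essentially the same approach as the paper's: both rest on the Gorenstein duality $Q:(Q:I)=I$ for ideals $I\supseteq Q$ in the zero-dimensional Gorenstein ring $R/Q$, using it to convert $Q:\m^i \subseteq \overline{Q}$ into $J \subseteq \m^i + Q$. The paper's proof is simply more terse, citing \cite[(3.2.12)]{BH} for the duality step that you have written out explicitly.
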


\begin{proof}
Since $R/Q$ is a zero-dimensional Gorenstein local ring, $(Q:J) = \overline{Q}$,
and $(Q:\m^i) \subseteq \overline{Q}$ if and only if
$J \subseteq \m^i + ~ Q $, cf. \cite[(3.2.12)]{BH}.
\end{proof}

\section{Goto invariants of local rings need not be bounded}\label{highdim}

Since  a regular local ring of dimension one is a rank-one discrete
valuation domain, the Goto number of every parameter ideal  is $0$
in this case. We prove below that in a two-dimensional regular local
ring, the Goto number of a parameter ideal $Q$ is precisely $\ord Q
- 1$, where $\ord Q$ is the highest power of $\m$ that contains $Q$.
Thus in a two-dimensional regular local ring, the Goto number of a
parameter ideal is uniquely determined by the order of the parameter
ideal. It seems natural to expect at least for many local rings
$(R,\m)$ that the Goto number $g(Q)$ becomes larger as $Q$ is in
higher and higher powers of $\m$. The following are several
invariants of a local ring  $(R,\m)$ involving Goto numbers $g(Q)$
of parameter ideals $Q$ of $R$.

\begin{eqnarray*}
 \goto_1(R)    &=&  \sup \left\lbrace  \frac{ g(Q)}{\ord(Q)}~~~~ | ~~~~ Q \hbox{
varies over parameter ideals of $R$}\right\rbrace,      \\
 \goto_2(R)    &=& \sup \left\lbrace \frac{ g(Q)}{\ord(Q : \m)}
 ~~~~| ~~~~
\hbox{ $Q$ varies over parameter ideals of $R$} \right\rbrace, \\
\goto_3(R) &=& \sup \left\lbrace\frac{ g(Q)}{ \ord(Q : \m^{g(Q)})}
~~~~| ~~~~ \hbox{ $Q$ varies over parameter ideals of $R$}
\right\rbrace.
\end{eqnarray*}

In order to avoid division by zero, in the definition of
$\goto_2(R)$, we exclude the case where $R$ is a regular local ring
and $Q = \m$.

Example \ref{2.2} demonstrates the existence,
for every integer $d \ge 3$,
of a regular local ring $(R,\m)$ of dimension  $d$ for
which each of the invariants $\goto_i(R), i \in \{1,2,3\}$, is
infinity.

%
%\begin{example} \label{2.1}
%Let $k$ be a field, $d$ an integer $ > 2$,
%$x_1, \ldots, x_d$ variables over $k$,
%and $R = k[[x_1, \ldots, x_d]]$. Let $n$ be a positive integer and
%consider the parameter ideal
%$Q = (x_1, x_2^n, \ldots, x_d^n)$. Then $g(Q) = (d-2)(n-1)$. For we have:
%$$
%(x_1, x_2^n, \ldots, x_d^n) : (x_1, \ldots, x_d)^{(d-2)(n-1)} =
%(x_1) + (x_2, \ldots, x_d)^n,
%$$
%and
%$$
%(x_1, x_2^n, \ldots, x_d^n) : (x_1, \ldots, x_d)^{(d-2)(n-1)+1} =
%(x_1) + (x_2, \ldots, x_d)^{n-1}.
%$$
%Clearly $\ord(Q) = \ord(Q : \m) = \ord(Q : \m^{(d-2)(n-1)}) = 1$, so
%that each of the  invariants $\goto_i$ is  at least $\frac{(d-2)(n-1)}{1}$ for all
%$n$, which means that they are all $\infty$.
%\end{example}
%
%
%But possibly one should not allow parameter ideals $Q$ with order
%$1$. Even with the restriction to parameter ideals of order greater
%than 1, we show in Example \ref{2.2} the existence for each integer
%$d \ge 3$ of a regular local ring $(R,\m)$ of dimension $d$ for
%which each of the invariants $\goto_i(R), i \in \{1,2,3\}$, is
%infinity.

\begin{example} \label{2.2}
%With $R$ as in Example \ref{2.1},
Let $k$ be a field, $d$ an integer $ > 2$,
$x_1, \ldots, x_d$ variables over $k$.
%and $R = k[[x_1, \ldots, x_d]]$. Let $n$ be a positive integer
Let $n \ge e$ be  positive integers,
and let  $Q = (x_1^e, x_2^n, \ldots, x_d^n)$.
Then $g(Q) = (d-2)(n-1) + e - 1$.  For we have:
$$
(x_1^e, x_2^n, \ldots, x_d^n) : (x_1, \ldots, x_d)^{(d-2)(n-1)+e-1}
= (x_1^e) + (x_1, \ldots, x_d)^n,
$$
which is integral over $Q$, and  $$
(x_1^e, x_2^n, \ldots, x_d^n) :
(x_1, \ldots, x_d)^{(d-2)(n-1)+e}$$ contains $x_2^{n-1}$, which is
not integral over $Q$. Furthermore,
$$\ord(Q) = \ord(Q : \m) = \ord(Q
: \m^{(d-2)(n-1) + e - 1}) = e.$$
Thus,  for each $i \in \{1,2,3\}$,
$\goto_i(R) \ge \frac{(d-2)(n-1) + e - 1}{e}$ for all $n \ge e$.
Since $d > 2$, we have   $\goto_i(R) = \infty$.
\end{example}

In the case where $(R,\m)$ is a two-dimensional regular local ring,
we prove in Theorem \ref{2.3} that the Goto number of a parameter
ideal $Q$ depends only on the order of $Q$.

\begin{theorem} \label{2.3} Let $(R,\m)$ be a two-dimensional regular local ring. Then for
each  parameter ideal  $Q$ of  $R$, the Goto number $g(Q) = \ord(Q)
- 1$.
\end{theorem}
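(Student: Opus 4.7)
The plan is to establish the two inequalities $g(Q)\ge n-1$ and $g(Q)\le n-1$ separately, where $n=\ord(Q)$. Since $\gr_\m R\cong k[X,Y]$ is a polynomial ring in two variables, hence a domain, the $\m$-adic order function extends to a valuation on $R$; in particular $\overline Q\subseteq\m^n$.

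I would handle the case $n=1$ first. Pick a generator $f\in Q$ with $\ord(f)=1$; such $f$ extends to a regular system of parameters for $R$, so $R/fR$ is a DVR in which the image of $Q$ is a principal, hence integrally closed, ideal. This forces $\overline Q\subseteq Q+fR=Q$, so $\overline Q=Q$, and $g(Q)=0=n-1$ is immediate.

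For $n\ge 2$, Lipman's structure theorem for $\m$-primary integrally closed ideals in a two-dimensional regular local ring gives $\mu(\overline Q)=\ord(\overline Q)+1=n+1>2=\mu(Q)$, so $Q\subsetneq\overline Q$ and Proposition~\ref{1.9} applies. Setting $J=Q:\overline Q$ and using $\m^i+Q=\m^i$ for $i\le n$ (since $Q\subseteq\m^n$), the conclusion $g(Q)=n-1$ reduces to $J\subseteq\m^{n-1}$ and $J\not\subseteq\m^n$, i.e.\ $\ord(J)=n-1$. For the upper bound $\ord(J)\le n-1$, the plan is to produce an element of $J$ of order exactly $n-1$ by writing $J=\bigcap_{w}(Q:w)$ as $w$ runs over the $n+1$ minimal generators of $\overline Q$ and extracting an explicit low-order survivor of this intersection; a Newton-polygon analysis makes such an element visible in the monomial case, and a change of variables reduces the general situation to this.

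For the harder lower bound $\ord(J)\ge n-1$, equivalent to $Q:\m^{n-1}\subseteq\overline Q$, the plan is as follows. Given $z\m^{n-1}\subseteq Q$, iterating gives $(z\ell^{n-1})^k\in Q^k$ for every $\ell\in\m$ and every $k\ge 1$, so $z\ell^{n-1}\in\overline Q$; varying $\ell$ yields $z\m^{n-1}\subseteq\overline Q$. To pass from $z\m^{n-1}\subseteq\overline Q$ to $z\in\overline Q$, I would combine Zariski's product theorem ($\overline{IK}=\overline I\cdot\overline K$ for $\m$-primary ideals in a 2-dim RLR), the classical reduction-one identity $\overline Q^{\,2}=Q\overline Q$, and Lipman's structural control of $\overline Q$. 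This final bootstrapping is the main obstacle of the proof, since $\overline Q:\m^{n-1}$ can properly contain $\overline Q$, so the inclusion $z\m^{n-1}\subseteq\overline Q$ by itself does not force $z\in\overline Q$.
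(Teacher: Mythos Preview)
Your proposal has genuine gaps in both inequalities. For the lower bound $g(Q)\ge n-1$ (equivalently $Q:\m^{n-1}\subseteq\overline Q$), the ``iterating'' step is content-free: from $z\m^{n-1}\subseteq Q$ you deduce $z\ell^{n-1}\in\overline Q$, but $z\ell^{n-1}\in Q\subseteq\overline Q$ is immediate, so nothing has been gained. You then correctly observe that $\overline Q:\m^{n-1}$ can strictly contain $\overline Q$, so the inclusion $z\m^{n-1}\subseteq\overline Q$ alone cannot force $z\in\overline Q$; but you stop there, and the invocation of Zariski's product theorem and $\overline Q^{\,2}=Q\overline Q$ is never turned into an argument. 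The paper does not argue this way: it cites Wang, whose proof establishes the sharper chain $Q:\m^{n-1}\subseteq Q\m^{n-1}:\m^{n-1}\subseteq\overline Q$, the first inclusion being the nontrivial step specific to two-dimensional regular local rings.

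For the upper bound $g(Q)\le n-1$ (equivalently $J\not\subseteq\m^n$), your plan to reduce by a change of variables to the monomial case does not work: a general parameter ideal $(a,b)$ in a two-dimensional regular local ring cannot be brought to monomial form by any automorphism (e.g.\ $(x^2,\,y^2+x^3)$). The paper's argument is completely different and elementary: after enlarging the residue field, choose $x\in\m\setminus\m^2$ whose leading form in $\gr_\m(R)$ divides neither $a^*$ nor $b^*$; since $R/(x)$ is a DVR one may arrange $b=b_0x$, and then $(a,x)=\m^n+(x)$ gives $b_0\m^n\subseteq(a,b_0x)=Q$, while a valuation check shows $b_0\notin\overline Q$. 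This produces directly an element of $Q:\m^n$ not in $\overline Q$, bypassing Proposition~\ref{1.9} and any structure theory of $\overline Q$.
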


\begin{proof}
Passing to the faithfully flat extension $R[X]_{\m R[X]}$ preserves
the parameter ideal property and its order and Goto number, so that
without loss of generality we may assume that $R$ has an infinite
residue field. Let $k = \ord Q$. The proof of \cite[Theorem
3.2]{Wang} shows that $k - 1 \le g(Q)$. (In Wang's notation in that
proof, it is shown that $Q : \m^{k- 1} \subseteq (Q\m^{k- 1} : \m^{k-
1}) \subseteq \overline Q$.) Now we prove that $g(Q) \le k-1$. Let $Q = (a,b)$.
Let $x \in \m \setminus \m^2$ be such that $\ord(a) = \ord(a(R/(x)))$
and $\ord(b) = \ord(b(R/(x))$. Since the residue field of $R$ is
infinite, it is possible to find such an element $x$. The condition
needed for $x$ is that its  image in the associated graded ring
$\gr_{\m}(R)$ is not a factor of the images of $a$ and $b$ in
$\gr_{\m}(R)$). Since $R/(x)$ is a one-dimensional regular local
ring, hence a principal ideal domain, by possibly permuting $a$ and
$b$ we may assume that $b \in (a,x)$. By subtracting a multiple of
$a$ from $b$, without loss of generality $b = b_0 x$ for some $b_0
\in R$. Note that $(a,x) = \m^k + (x)$, and $\ord a = \ord Q = k \le
\ord b$. It follows that $b_0 m^k \subseteq b_0 (a,x) \subseteq (a,
x b_0) \subseteq (a,b)$. However, $b_0 \not \in \overline{(a,b)}$:
otherwise for all discrete valuations $v$ centered on $\m$, $v(b_0)
\ge \min\{v(a), v(b)\}$, whence since $v(b) = v(b_0 x) > v(b_0)$,
necessarily $v(b_0) \ge v(a)$ for all such $v$, so that $b_0 \in
\overline{(a)} = (a)$, contradicting the assumption that $(a,b)$ is
a parameter ideal. This proves that $g(Q) < k = \ord Q$.
\end{proof}

If $(R,\m)$ is  a regular local ring, then the powers of $\m$ are
integrally closed. Hence, in this case, if $Q:\m^i$ is integral over
$Q$, then $\ord Q = \ord (Q:\m^i)$. Thus  Theorem \ref{2.3} implies
the following:

\begin{corollary} \label{2.4} If $(R,\m)$ is a two-dimensional
regular local ring, then each of the invariants $\goto_i(R), i \in
\{1,2,3\}$, is one.
\end{corollary}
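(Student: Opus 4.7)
The plan is to reduce all three invariants to a direct computation using Theorem~\ref{2.3} together with the observation (stated just before the corollary) that in a regular local ring the powers of $\m$ are integrally closed, so that whenever $Q:\m^i$ is integral over $Q$ one automatically has $\ord(Q:\m^i)=\ord(Q)$.

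First I would invoke Theorem~\ref{2.3} to write $g(Q)=\ord(Q)-1$ for every parameter ideal $Q$ of $R$. This already gives $\goto_1(R)\le 1$, because $g(Q)/\ord(Q)=1-1/\ord(Q)<1$ for every parameter ideal. To see that the supremum is $1$, I would exhibit parameter ideals of arbitrarily large order, for instance $Q_n=(x^n,y^n)$ for a regular system of parameters $x,y$, so that $\ord(Q_n)=n$ and the ratio tends to $1$.

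For $\goto_2(R)$ I would split on the value of $g(Q)$. When $g(Q)\ge 1$, the ideal $Q:\m$ is integral over $Q$ by definition of $g(Q)$, and the integrally closed property then forces $\ord(Q:\m)=\ord(Q)$, so the ratio becomes $(\ord Q-1)/\ord Q$. When $g(Q)=0$ the numerator is $0$; here the exclusion of $Q=\m$ ensures the denominator is nonzero (indeed $Q\subsetneq\m$ forces $Q:\m\subseteq\m$ and thus $\ord(Q:\m)\ge 1$). The same sequence $Q_n$ then witnesses $\goto_2(R)=1$. The argument for $\goto_3(R)$ is essentially identical: by the very definition of $g(Q)$, the ideal $Q:\m^{g(Q)}$ is integral over $Q$, so $\ord(Q:\m^{g(Q)})=\ord(Q)$ whenever $g(Q)\ge 1$, making the ratio $(\ord Q-1)/\ord Q$ once more; the case $g(Q)=0$ is trivial since $Q:\m^0=Q$ and the numerator vanishes.

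I do not anticipate any serious obstacle; the corollary is a bookkeeping exercise once Theorem~\ref{2.3} and the integrally closed property of the powers of $\m$ are in hand. The only points requiring care are the boundary cases $\ord Q=1$ (where the numerator vanishes) and verifying for each invariant that the ratio approaches but never exceeds $1$, so that $\goto_i(R)=1$ is a supremum that is in fact not attained.
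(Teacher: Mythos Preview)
Your proposal is correct and follows exactly the approach indicated in the paper: combine Theorem~\ref{2.3} with the observation that powers of $\m$ are integrally closed (so $\ord(Q:\m^i)=\ord Q$ whenever $Q:\m^i\subseteq\overline Q$), reducing each ratio to $(\ord Q-1)/\ord Q$. Your treatment is simply more explicit than the paper's one-line deduction, in that you spell out the boundary cases $g(Q)=0$ and exhibit the sequence $Q_n=(x^n,y^n)$ witnessing that the supremum equals~$1$.
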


\begin{remark} \label{2.5} Let $(\widehat R, \widehat{\m})$ denote
the $\m$-adic completion of the Noetherian local ring $(R,\m)$.
Since $R/I \cong \widehat R/I\widehat R$ for each $\m$-primary ideal
$I$ of $R$, the $\m$-primary ideals of $R$ are in one-to-one
inclusion preserving correspondence with the $\widehat{\m}$-primary
ideals of $\widehat R$. Also, if $I$ is an $\m$-primary ideal, then
 $\overline{I}\widehat R$ is the
integral closure of $I\widehat R$ \cite[Lemma 9.1.1]{SHbk}.
Since $R/\m \cong \widehat R/\widehat{\m}$,
and since each parameter ideal of $\widehat R$
has the form $Q\widehat R$, where $Q$ is a parameter ideal of $R$,
the set $\{\ell_R(\overline{Q}/Q) ~|~ Q \mbox{ is a parameter ideal of } R \}$
is identical to the corresponding set for $\widehat R$.
Since $\widehat R$ is flat
over $R$, we also have $(Q:_R\m^i)\widehat R = (Q\widehat
R:_{\widehat R}\widehat{\m^i})$ for each positive integer $i$.
Therefore,  for each parameter ideal $Q$ of $R$, the Goto number $g(Q)
= g(Q\widehat R)$. Hence  the set
$\mathcal G(R)=  \{g(Q) ~ | ~ Q \mbox{ is a parameter ideal of } R
\}$ is identical to the corresponding set $\mathcal G(\widehat R)$
for $\widehat R$.

\end{remark}

\section{One-dimensional Noetherian  local rings} \label{onediml}

Throughout this section, let $(R,\m)$ be a one-dimensional
Noetherian local ring. In subsequent sections we restrict to the
special case where $R$ is a numerical semigroup ring. If $R$ is a
regular local ring, then it is a principal ideal domain, and hence
the Goto number  $g(Q) = 0$ for every parameter ideal $Q$.  Thus to
get more interesting variations on the Goto number of parameter
ideals, we restrict our attention to non-regular one-dimensional
Noetherian local rings.

Corollary \ref{1.7} is useful for examining the Goto number of
parameter ideals. We observe in Theorem \ref{3.1} that the Goto
number of parameter ideals in a sufficiently high power of the
maximal ideal of $R$  are all the same and that this eventually
constant value is the minimal possible Goto number of  a parameter
ideal of $R$.

\begin{theorem} \label{3.1}
Let $(R,\m)$ be a one-dimensional Noetherian local ring.
\begin{enumerate}
\item If $yR$ is a parameter ideal of $R$, then $g(Q) \le g(y)$ for
every parameter ideal $Q$ such that $Q \subseteq yR$.
\item There exists a positive integer $n$ such that all parameter ideals
of $R$ contained in $\m^n$ have the same Goto number. Moreover, this
number is the minimal Goto number of a parameter ideal of~$R$.
\end{enumerate}
\end{theorem}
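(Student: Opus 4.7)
The plan is to derive both parts as consequences of Corollary \ref{1.7}. For part (1), I would exploit the fact that in a one-dimensional local ring every parameter ideal is principal, so that any parameter ideal $Q$ contained in $yR$ factors as a product of two parameter ideals, one of which is $yR$. Part (2) then follows by taking $y$ to be a parameter achieving the minimum Goto number and choosing $n$ with $\m^n \subseteq yR$.

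For part (1), write $Q = zR$, which is possible since $\dim R = 1$ forces parameter ideals to be principal. Since $Q \subseteq yR$, we have $z = yw$ for some $w \in R$. I would then verify that $wR$ is itself a parameter ideal: because $zR$ is $\m$-primary, $z$ lies outside every minimal prime of $R$, hence so does $w$; since $\dim R = 1$, Krull's principal ideal theorem then forces $wR$ to be $\m$-primary. Now $Q = (yR)(wR)$ is a product of two parameter ideals, so Corollary \ref{1.7} gives
\[
g(Q) \;\le\; \min\{g(yR),\, g(wR)\} \;\le\; g(yR),
\]
which is the claim.

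For part (2), observe that $\mathcal G(R) \subseteq \mathbb N_0$ is nonempty, so by well-ordering it has a minimum $m$. Pick a parameter ideal $yR$ with $g(yR) = m$. Since $yR$ is $\m$-primary, there exists a positive integer $n$ with $\m^n \subseteq yR$. Any parameter ideal $Q$ contained in $\m^n$ is then contained in $yR$, so part (1) gives $g(Q) \le g(yR) = m$; combined with $g(Q) \ge m$ by minimality, this yields $g(Q) = m$.

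No serious obstacle is anticipated. The only mild subtlety is the verification that the cofactor $wR$ is a parameter ideal, which is where the hypothesis $\dim R = 1$ (together with $zR$ being $\m$-primary) does the real work; the remainder is a packaging of Corollary \ref{1.7} with the well-ordering of $\mathbb N_0$ and the $\m$-primary property of $yR$.
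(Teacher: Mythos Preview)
Your proof is essentially the same as the paper's: factor $Q$ through $yR$ and invoke Corollary~\ref{1.7} for part~(1), then pick $y$ attaining the minimum and use $\m^n \subseteq yR$ for part~(2). The only small gap is that your verification ``$wR$ is a parameter ideal'' fails when $w$ is a unit (then $wR = R$ is not $\m$-primary); the paper handles this by treating the case $Q = yR$ separately, and you should do the same.
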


\begin{proof} If $Q = qR$ is a parameter ideal and $Q \subseteq yR$,
then $q = yz$ for some $z \in R$. If $Q = yR$, then $g(Q) = g(y)$,
while if $Q$ is properly contained in $yR$, then $zR$ is a parameter
ideal, and Corollary~\ref{1.7} implies that $g(Q) \le g(y)$. This
establishes item~(1). For item~(2), let $yR$ be a parameter ideal
such that $g(y)$ is the minimal element of the set
\[
\mathcal G(R) = \{g(Q) ~ | ~ Q \mbox{ is a parameter ideal of } R
\}.
\]
Since $yR$ is a parameter ideal, there exists a positive integer $n$
such that $\m^n \subseteq yR$. By item ~(1), $g(Q) \le g(y)$ for
every parameter ideal $Q \subseteq \m^n$, and by the choice of
$g(y)$, we have $g(Q) = g(y)$ is the minimal Goto number of a
parameter ideal of $R$.
\end{proof}

\begin{remark} \label{3.2} Let  $g = g(Q)$ denote the Goto number of
the parameter ideal $Q$. The chain of ideals $$ Q = Q:\m^0
\subsetneq Q:\m \subsetneq Q:\m^2 \subsetneq \cdots \subsetneq
Q:\m^g \subseteq \overline Q $$ implies that  the length
$\ell_R(\overline{Q}/Q)$ of the $R$-module $\overline Q/Q$  is an
upper bound on $g(Q)$. Thus if $(R,\m)$ is a one-dimensional
Noetherian reduced ring\footnote{If $(R,\m)$ is a Noetherian local
ring that is not equal to its total quotient ring and if
$\overline{R}$  is module-finite over $R$, then $R$ is reduced. For
if $x \in \m$ is a regular element and $y \in R$ is nilpotent, then
$y/x^n \in \overline{R}$, so $y \in x^n\overline{R}$, for each $n
\in \mathbb N$.  But if $\overline{R}$ is module-finite over $R$,
then $\overline{R}$ is Noetherian and $\bigcap_{n=1}^\infty
x^n\overline{R} = (0)$, cf. \cite[Prop. 1.5.2]{SHbk}.}
 such that $\overline{R}$  is a finitely
generated $R$-module, then the length of $\overline{R}/R$ is an
upper bound for $g(Q)$ and therefore the set $\mathcal G(R)$ is
finite. To see this, let $Q = qR$ be a parameter ideal of $R$. Then
$q\overline R$ is an integrally closed ideal of $\overline R$, and
$\overline Q = q\overline R \cap R$, cf.
\cite[Proposition~1.6.1]{SHbk}. Thus we have
$$
\ell_R(\overline R/R) = \ell_R(q\overline R/qR) \ge \ell_R(\overline
Q/Q).
$$
\end{remark}

\begin{remark}\label{3.3}
For certain parameter ideals $Q$ it is possible to compute the Goto
number $g(Q)$ as an index of nilpotency. If $Q = xR$ is a reduction
of $\m$, then $\m$ is the integral closure of $Q$ and
$$
g(Q) = ~ \max\{ i ~ | ~ (Q :\m^i)  \ne R  \} = ~ \min\{ i ~ | ~
\m^{i+1}  \subseteq Q  \}
$$
is the index of nilpotency of $\m$ with respect to $Q$
\cite[(4.4)]{HKU}. This is an integer that is less than or equal to
the reduction number of $\m$ with respect to $Q$,  with equality
holding if the associated graded ring $\gr_{\m}(R)$ is
Cohen-Macaulay.

\end{remark}

We prove in Theorem \ref{3.4} a sharpening of Theorem \ref{3.1} in
the case where  $\overline{R}$ is module-finite over $R$.

\begin{theorem} \label{3.4}  Let $(R,\m)$ be a one-dimensional Noetherian local
reduced ring  such that $\overline R$ is module-finite over $R$. Let
$C = R :_R \overline R$ be the conductor of $R$ in $\overline{R}$,
and let $x \in \m $ and $y \in C $ generate parameter ideals.  Then
for each positive integer $n$, the Goto number $g(x^ny) = g(xy)$.
Thus for all parameter ideals $Q = qR \subseteq x C =
\overline{xC}$, we have $ g(Q) = g(xy)$. Furthermore, this is the
minimal possible Goto number of a parameter ideal in $R$.
\end{theorem}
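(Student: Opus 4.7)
The plan is to prove three claims in succession: (a) $g(x^n y) = g(xy)$ for every $n \ge 1$, (b) $g(qR) = g(xy)$ for every parameter ideal $qR \subseteq xC$, and (c) this common value is the minimum of $\mathcal G(R)$. The main obstacle is the nontrivial direction $g(xy) \le g(x^n y)$ in (a); Corollary \ref{1.7} supplies only the opposite inequality.

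For (a) I would induct on $n$, the base case $n = 1$ being a tautology. Let $g = g(xy)$ and pick $r \in (x^n y R :_R \m^g)$. Because $x^n y R \subseteq x^{n-1} y R$, we have $r \in (x^{n-1} y R :_R \m^g)$, which by the induction hypothesis lies in $\overline{x^{n-1} y R} = x^{n-1} y \overline R$; the second equality comes from $y \in C$ (which forces $x^{n-1} y \overline R \subseteq R$) together with \cite[Proposition~1.6.1]{SHbk}, applied as in Remark \ref{3.2}. Write $r = x^{n-1}(ys)$ with $s \in \overline R$ and set $t := ys$; the conductor condition $y\overline R \subseteq R$ gives $t \in R$. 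Since $x^{n-1}$ is a non-zero-divisor, the inclusion $x^{n-1} t \m^g \subseteq x^n y R = x^{n-1}(xy R)$ cancels to $t \m^g \subseteq xy R$, so $t \in (xy R :_R \m^g) \subseteq xy \overline R$ by the definition of $g = g(xy)$. Therefore $r = x^{n-1} t \in x^n y \overline R = \overline{x^n y R}$, which completes the induction.

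For (b), nothing in the argument for (a) used more about $y$ than membership in $C$ and the parameter-generating property, so (a) applied with any other $c \in C$ generating a parameter ideal yields $g(x^n c) = g(xc)$. By Theorem \ref{3.1} there exists $N$ such that every parameter ideal contained in $\m^N$ has the common minimal Goto number; choosing $n$ large enough that $x^n y, x^n c \in \m^N$, we obtain $g(xy) = g(x^n y) = g(x^n c) = g(xc)$. Any $Q = qR \subseteq xC$ has $q = xc$ for a non-zero-divisor $c \in C$, so $g(Q) = g(xy)$. For (c), given any parameter ideal $Q_0 = q_0 R$, the element $q_0 xy$ is a non-zero-divisor lying in $xC$ (since $C$ is an $R$-ideal, so $q_0 y \in C$), and (b) combined with Corollary \ref{1.7} gives $g(xy) = g(q_0 xy R) \le g(q_0 R) = g(Q_0)$, establishing minimality.
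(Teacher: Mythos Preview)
Your proof is correct and follows essentially the same approach as the paper: both arguments reduce the key inequality $g(xy)\le g(x^n y)$ to writing an element of $(x^n yR:\m^g)$ as $x^{n-1}$ times something in $R$, then cancelling the regular element $x^{n-1}$ to land in $(xyR:\m^g)\subseteq xy\overline R$. The only organizational difference is that you reach the factorization $r=x^{n-1}t$ via induction on $n$ (using the induction hypothesis to place $r$ in $x^{n-1}y\overline R$), whereas the paper argues directly for arbitrary $n$ by multiplying a putative counterexample $w$ by a suitable power $x^j$ to force it into $x^{n-1}R$; your handling of the concluding claims about $Q\subseteq xC$ and minimality is also somewhat more explicit than the paper's, but relies on the same ingredients (Theorem~\ref{3.1} and Corollary~\ref{1.7}).
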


\begin{proof}
By Corollary \ref{1.7}, $g(xy) \ge g(x^ny)$. To prove that $g(xy)
\le g(x^ny)$, it suffices to prove for each positive integer $i$
that
\begin{eqnarray} \label{3.11}
(xyR:\m^i) \subseteq xy\overline{R} \implies (x^nyR:\m^i) \subseteq
x^ny\overline{R}.
\end{eqnarray}
Assume there exists $w \in R$ with $w\m^i \subseteq x^nyR$ and with
$w \notin x^ny\overline{R}$. Notice that $xw \in x^ny\overline{R}
\subseteq x^nC \subseteq x^nR$ implies $w \in x^{n-1}R$. Therefore
by replacing $w$ if necessary by $wx^j$ for some positive integer
$j$, we may assume that $w \in x^{n-1}R$, so $w = x^{n-1}z$ for some
$z \in R$. Thus $x^{n-1}z\m^i \subseteq x^nyR$ implies that $z\m^i
\subseteq xyR$, so $z \in xyR:\m^i$.  Moreover, $w = x^{n-1}z \notin
x^ny\overline{R}$ implies that $z \notin xy\overline{R}$. This
establishes the implication displayed in (\ref{3.11}).
Theorem~\ref{3.1} implies that for $n$ sufficiently large $g(x^ny)$
is the minimal Goto number of a parameter ideal of $R$.  This
completes the proof of Theorem~\ref{3.4}.
\end{proof}

In comparison with Theorem \ref{3.4}, we demonstrate  in Example
\ref{4.6} that the Goto number $g(Q)$ of parameter ideals contained
in the conductor need not be constant,  even in the case where
$(R,\m)$ is a Gorenstein numerical semigroup ring.

Theorem \ref{3.5} establishes conditions on a one-dimensional
Noetherian local ring $R$ in order that the set
$\{ \ell_R(\overline Q/Q) ~|~ Q \mbox{ is a parameter ideal of }
R \}$ is finite.

\begin{theorem} \label{3.5}
Let $(R,\m)$ be a one-dimensional Noetherian local ring, let
$(\widehat R, \widehat{\m})$ denote the $\m$-adic completion of $R$,
and let $\n$ denote the nilradical of $\widehat R$.  The
following statements are equivalent.
\begin{enumerate}
\item  The length $\ell_{\widehat R}(\n)$ is finite.
\item The set $
\{ \ell_R(\overline Q/Q) ~ | ~ Q \mbox{ is a parameter ideal of } R
\} $ is finite.
\end{enumerate}
\end{theorem}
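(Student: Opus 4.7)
The plan is to reduce to the complete case and then decompose $\ell_R(\overline Q/Q)$ into a ``nilpotent'' contribution from $\n$ and a ``reduced'' contribution from $R/\n$. By Remark~\ref{2.5} both conditions (1) and (2) are unchanged when $R$ is replaced by $\widehat R$, so I assume $R$ is complete. Since every element of $\n$ is nilpotent and satisfies $x^n=0$, which is a trivial equation of integral dependence on any ideal, $\n\subseteq\overline Q$ for every parameter ideal $Q=qR$. Combined with \cite[Proposition~1.1.5]{SHbk} (and a routine lifting argument), which identifies the image of $\overline Q$ in $R/\n$ with $\overline{q(R/\n)}$, this gives a short exact sequence
\[
0\longrightarrow \n/(\n\cap Q)\longrightarrow \overline Q/Q\longrightarrow \overline{q(R/\n)}\bigl/q(R/\n)\longrightarrow 0
\]
and hence the length formula
\[
\ell_R(\overline Q/Q)\;=\;\ell_R\bigl(\n/(\n\cap Q)\bigr)+\ell_{R/\n}\bigl(\overline{q(R/\n)}/q(R/\n)\bigr).
\]

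The second summand is uniformly bounded in $Q$. Since $R$ is a complete Noetherian local ring it is Nagata, hence so is $R/\n$; as a one-dimensional reduced local Nagata ring, $R/\n$ has module-finite integral closure, so Remark~\ref{3.2}, applied to $R/\n$, yields a constant $N$ with $\ell_{R/\n}\bigl(\overline{q(R/\n)}/q(R/\n)\bigr)\le \ell_{R/\n}\bigl(\overline{R/\n}/(R/\n)\bigr)=N$ for every parameter ideal $qR$ of $R$.

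For (1)~$\Rightarrow$~(2), if $\ell_{\widehat R}(\n)<\infty$ then $\ell_R(\n/(\n\cap Q))\le \ell_R(\n)$, so $\ell_R(\overline Q/Q)\le \ell_R(\n)+N$ for every parameter ideal, and hence the set in (2) is finite. For (2)~$\Rightarrow$~(1) I argue the contrapositive. If $\ell_R(\n)=\infty$, then since $R$ is one-dimensional local Noetherian the finitely generated module $\n$ has Krull dimension one, so its Hilbert--Samuel function satisfies $\ell_R(\n/\m^k\n)\to\infty$ as $k\to\infty$. For each $k$, prime avoidance produces $q\in\m^k$ lying outside every minimal prime of $R$, giving a parameter ideal $Q=qR\subseteq\m^k$. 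The Artin--Rees lemma supplies a constant $c$ with $\n\cap\m^k\subseteq\m^{k-c}\n$ for $k\ge c$, whence
\[
\ell_R(\overline Q/Q)\;\ge\;\ell_R\bigl(\n/(\n\cap Q)\bigr)\;\ge\;\ell_R(\n/\m^{k-c}\n),
\]
which grows without bound and violates~(2).

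The main technical point is the appeal to completeness in the second paragraph: it is precisely what makes Remark~\ref{3.2} available for $R/\n$, through the Nagata property and module-finite normalization. Once the decomposition into nilpotent and reduced parts is set up, the forward direction is immediate and the converse reduces to the standard pairing of Hilbert--Samuel growth with Artin--Rees.
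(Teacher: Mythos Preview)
Your proof is correct and follows essentially the same approach as the paper: reduce to the complete case, split $\ell_R(\overline Q/Q)$ into a nilpotent part bounded by $\ell_R(\n)$ and a reduced part bounded via Remark~\ref{3.2}, and for the converse show $\ell_R(\n/(\n\cap Q))$ is unbounded. The only difference is that for $(2)\Rightarrow(1)$ the paper avoids Artin--Rees by taking $Q_n=x^nR$ for a fixed parameter $x$ and observing directly that $\n\cap x^nR=x^n\n$ (since $x$ is a nonzerodivisor on $R/\n$), so that $\ell_R(\n/x^n\n)\to\infty$; your route via Artin--Rees and prime avoidance works but is a bit heavier than needed.
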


\begin{proof}  By
Remark~\ref{2.5},  item (2) holds for $R$ if and only if it holds
for $\widehat R$.  Therefore,  to prove $(1) \iff (2)$, we may
assume that $R$ is complete.

Assume that $\ell_R(\n)$  is finite, and let $R' = R/\n$.  If $Q$ is
a parameter ideal of $R$, then $\n \subset \overline Q$ and
$\ell_R((Q+\n)/Q) \le \ell_R(\n)$. Since $R'$ is a reduced complete
Noetherian local ring, its integral closure is a finite $R'$-module.
Thus by Remark~\ref{3.2}, the set $ \{ \ell_R'(\overline QR'/QR') ~
| ~ Q \mbox{ is a parameter ideal of } R' \} $ is  bounded by some
integer $s$. It follows that $s + \ell_R(\n)$ is an upper bound for
$\ell_R(\overline Q/Q)$, so the set $ \{ \ell_R(\overline Q/Q) ~ | ~
Q \mbox{ is a parameter ideal of } R \} $ is finite.

Assume that $\ell_R(\n)$  is infinite and let $Q_1 = xR$ be a
parameter ideal of $R$. For each positive integer $n$, let
$Q_n = x^nR$.  Then $Q_n + \n \subseteq \overline{Q_n}$,  and
$$
\frac{(Q_n+ \n)}{Q_n} \cong \frac{\n}{(Q_n \cap \n)} =  \frac{\n}{Q_n\n}.
$$
Hence $\ell_R(\overline{Q_n}/Q_n) \ge \ell_R(\n/x^n\n)$. Therefore
$\ell_R(\overline{Q_n}/Q_n)$ goes to infinity as $n$  goes to
infinity. This completes the proof of Theorem~\ref{3.5}.
\end{proof}

\begin{corollary} \label{3.50} With notation as in Theorem~\ref{3.5},
if the length $\ell_{\widehat R}(\n)$ is finite, then the set
$\mathcal G(R) = \{g(Q) ~ | ~ Q \mbox{ is a parameter ideal of }
R \}$ is finite.
\end{corollary}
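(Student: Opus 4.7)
The plan is to deduce this corollary directly by combining Remark~\ref{3.2} with Theorem~\ref{3.5}, so the argument should be essentially one line of bookkeeping once the two ingredients are in hand.

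First, I would invoke the strict ascending chain
\[
Q = Q:\m^0 \subsetneq Q:\m \subsetneq Q:\m^2 \subsetneq \cdots \subsetneq Q:\m^{g(Q)} \subseteq \overline{Q},
\]
recorded at the start of Remark~\ref{3.2}. This chain is valid for every one-dimensional Noetherian local ring (no reducedness is needed for it), and it immediately yields the universal upper bound $g(Q) \le \ell_R(\overline{Q}/Q)$ for every parameter ideal $Q$ of $R$.

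Next, I would apply Theorem~\ref{3.5}: under the hypothesis $\ell_{\widehat R}(\n) < \infty$, the set
\[
\{\, \ell_R(\overline{Q}/Q) \mid Q \text{ is a parameter ideal of } R \,\}
\]
is finite, hence bounded by some nonnegative integer $N$. Combining this with the previous step gives $g(Q) \le N$ for every parameter ideal $Q$, so $\mathcal G(R) \subseteq \{0, 1, \ldots, N\}$ and is therefore finite.

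Since both ingredients are already in place, there is no real obstacle to carrying out this plan; the only thing worth verifying is that the chain in Remark~\ref{3.2} is quoted in its general form (not the reduced-ring specialization that follows in the same remark), so that the bound $g(Q) \le \ell_R(\overline{Q}/Q)$ is available without any additional assumption on $R$.
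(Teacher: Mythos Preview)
Your proposal is correct and matches the paper's own proof exactly: the paper simply says ``Apply Theorem~\ref{3.5} and Remark~\ref{3.2},'' which is precisely the combination you spell out. Your care in noting that only the general chain inequality from Remark~\ref{3.2} (not the reduced-ring refinement) is needed is appropriate and accurate.
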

\begin{proof} Apply Theorem~\ref{3.5} and Remark~\ref{3.2}.
\end{proof}

\begin{remark} \label{3.51}
Let $(R,\m)$ be a one-dimensional reduced Cohen-Macaulay local ring,
and let $\widehat R$ denote the $\m$-adic completion of $R$. If the
nilradical $\n$ of $\widehat R$ is nonzero, then $\ell_{\widehat R}(\n)$
is infinite.  For if $xR$ is a parameter ideal of  $R$, then $x$ is a
regular element of $\widehat R$, and hence $\{x^n\n \}_{n=1}^\infty$ is a
strictly descending chain of ideals of $\widehat R$. It is known that
$\n = (0)$ if and only if $\overline R$ is module finite over $R$.
There are well-known examples of one-dimensional Noetherian local domains
$R$ for which $\overline R$ is not module finite over $R$. For
such a ring $R$, Theorem~\ref{3.5}
implies that the set  $\{ \ell_R(\overline Q/Q) ~|~ Q \mbox{ is a parameter ideal of }
R \}$ is not finite.

A specific example of a one-dimensional Noetherian local domain $R$
for which $\overline R$ is not module finite over $R$ is given by
Nagata \cite[(E3.2), page~206]{N} and described in  \cite[Ex.~4.8,
page~89]{SHbk}. Let $A = k^p[[X]][k]$, where $k$ is a field of
characteristic $p > 0$ such that $[k:k^p] = \infty$. Then $A$ is a
one-dimensional regular local ring. The example of Nagata is
$$
R = \frac{A[Y]}{(Y^p - \sum_{i \ge 1} b_i^p X^{ip})},
$$
where $\{b_i\}_{i=1}^\infty$ are elements of $k$ that are
$p$-independent over $k^p$.

We prove that the set $ \mathcal G(R)$ of Goto numbers of parameter
ideals of $R$ is infinite. By Remark~\ref{2.5}, it suffices to prove
that the completion $\widehat R$ of $R$ has this property.  Notice
that $\widehat R$ is a homomorphic image of a two-dimensional
regular local domain: indeed, with $S = k[[X,Y]]$, then $\widehat R
\cong  S/Y^pS$, so $\widehat R = k[[x,y]]$, where $y^p = 0$.
Corollary~\ref{3.71} below implies that $\mathcal G(R)$ is infinite.
\end{remark}

\begin{theorem} \label{3.7}  Let $(R,\m)$ be a one-dimensional Noetherian
local ring. If there exists  a nonzero principal ideal $yR$ such
that $R/yR$ is one-dimensional and Cohen-Macaulay
and $(0) : y$ is contained in the nilradical, then the set
$\mathcal G(R)$ is infinite.
\end{theorem}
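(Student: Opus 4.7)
The plan is to exhibit an explicit infinite family of parameter ideals $Q_n$ with $g(Q_n)\to\infty$; the construction is modeled on the ideals $(x^n+y^{p-1})$ which work inside the example $\widehat R=k[[x,y]]/(y^p)$ cited in Remark~\ref{3.51}.

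First, by Remark~\ref{2.5} I may assume $R$ is complete. Since $\dim R=\dim R/yR=1$, $y$ is a zero-divisor, hence $\Ann(y)\ne(0)$; by hypothesis $\Ann(y)\subseteq\n$, so I may choose a nonzero nilpotent $z\in\Ann(y)$. Since $\m$ is not a minimal prime of $R$ (because $\dim R=1>0$), prime avoidance yields $x\in\m$ lying outside every minimal prime of $R$. The minimal primes of $R/yR$ are images of the minimal primes of $R$ containing $y$, so $\bar x\in R/yR$ also lies outside every minimal prime of $R/yR$; combined with the Cohen--Macaulay hypothesis on $R/yR$, this makes $\bar x$ a non-zero-divisor whose ideal is $\bar\m$-primary, i.e., a regular parameter of $R/yR$.

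For each $n\ge1$, set $Q_n=(x^n+z)R$. Each $Q_n$ is a parameter ideal: modulo any minimal prime $\p$ of $R$, $z\in\n\subseteq\p$, so $x^n+z\equiv x^n\pmod\p$; since $x\notin\p$, also $x^n+z\notin\p$, so $Q_n$ is $\m$-primary. Note $\overline{Q_n}\supseteq Q_n+\n\supseteq(x^n)+\n$, using $\n\subseteq\overline{(0)}\subseteq\overline{Q_n}$.

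The main assertion is $g(Q_n)\to\infty$, which forces $\mathcal G(R)$ to be infinite. The key identity driving the estimate is $y(x^n+z)=yx^n$ (which comes from $yz=0$). If $f\in Q_n:\m^i$, then for every $m\in\m^i$ there is $h\in R$ with $fm=(x^n+z)h$; multiplying by $y$ gives $yfm=yx^nh$, so $yf\in (yx^n)R:\m^i$. Since $x$ is a non-zero-divisor modulo $\n$, this controls the image of $f$ in the reduced quotient $R/\n$. Iterating this argument with higher powers of $y$ and with all elements of $\Ann(y)$, in the spirit of the model calculation in $k[[x,y]]/(y^p)$, one shows that whenever $i$ is below a bound growing linearly with $n$, the image of $f$ in $R/\n$ must lie in $(\bar x)^n$, so that $f\in(x^n)+\n\subseteq\overline{Q_n}$. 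This gives $g(Q_n)\ge i$, and hence $g(Q_n)\to\infty$.

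The principal obstacle is making the iterative descent rigorous in full generality, because $\n$ need not be principal and $R/\n$ need not be a discrete valuation ring. The Cohen--Macaulay hypothesis on $R/yR$ controls how $x$-multiplication behaves modulo the nilpotents, the containment $\Ann(y)\subseteq\n$ keeps all obstructing terms nilpotent, and the nilpotence of $z$ guarantees the iteration terminates after a bounded number of steps independent of $n$. In the model case $R=k[[x,y]]/(y^p)$ with $z=y^{p-1}$, a direct calculation in fact yields $g((x^n+y^{p-1}))=n$, confirming the linear growth asserted here.
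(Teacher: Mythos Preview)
Your proposal is not a complete proof, and you say so yourself: the ``iterative descent'' that is supposed to force the image of $f$ into $(\bar x)^n$ in $R/\n$ is never carried out, and the sentence beginning ``The principal obstacle is making the iterative descent rigorous'' is an admission that the main step is missing. Concretely, from $yf\in (yx^nR):\m^i$ you cannot cancel $y$ (it is a zero-divisor), and $y$ need not even be nilpotent (e.g.\ $R=k[[X,Y]]/(Y^2(Y-X))$ with $y$ the image of $Y$ satisfies all hypotheses but $y^k=x^{k-2}y^2\neq 0$ for all $k$), so there is no evident inductive mechanism that turns repeated multiplication by $y$ into a bound on $i$ that grows with~$n$. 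The verified model case $k[[x,y]]/(y^p)$ does not supply such a mechanism in general.

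The paper sidesteps all of this with a different and much simpler family: take $Q_n=(y+x^n)$ rather than $(x^n+z)$. Choose $x\in\m$ outside every $P\in\Ass(R/yR)$ (these are exactly the minimal primes of $R$ containing $y$, by the Cohen--Macaulay hypothesis) and inside every other minimal prime of~$R$. Then modulo each minimal prime one of $y,x^n$ vanishes, so $(y,x^n)+\n\subseteq\overline{Q_n}$. The Cohen--Macaulay hypothesis on $R/yR$ now enters once and decisively: $x$ is regular on $R/yR$, hence $(yR:x^n)=yR$. If $r\in Q_n:\m^n$ then $rx^n=a(y+x^n)$ for some $a$, so $(r-a)x^n=ay$ gives $r-a=by$ for some $b$; then $(bx^n-a)y=0$ forces $a-bx^n\in(0):y\subseteq\n$, and therefore $r=by+a\in(y,x^n)+\n\subseteq\overline{Q_n}$. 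Thus $g(Q_n)\ge n$ with no iteration at all. The point you are missing is that the hypothesis gives exact control of $(yR:x^n)$, and the parameter ideal should be built from $y$ itself, not from an auxiliary nilpotent in $\Ann(y)$.
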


\begin{proof} The assumption that $R/yR$ is one-dimensional and Cohen-Macaulay
implies that each $P \in \Ass R/yR$ is a minimal prime of $R$. Let
$$ x \in \m \setminus \bigcup_{P \in \Ass R/yR} P.
%$$ x \in \m \setminus \bigcup\{P \, | \, P \in \Ass R/yR \}.
$$
If $R$ has minimal primes other than those in $\Ass R/yR$, choose
$x$ also to be in each of these other minimal primes of $R$.  For
each positive integer $n$, let $Q_n := (y + x^n)R$. Notice that
$Q_n$ is a parameter ideal of $R$. Checking integral closure modulo
minimal primes, we see that $(y, x^n)R + \n \subseteq
\overline{Q_n}$, where $\n$ is the nilradical of $R$. We prove that
$g(Q_n) \ge n$. Let $r \in (Q_n:\m^n)$. Then $r \in (Q_n:x^n)$, so
$rx^n = a(y+x^n)$, for some $a \in R$. Hence $(r-a)x^n = ay$, so
$r-a \in (yR:x^n)$.  Since $x^n$ is regular on $R/yR$, we have  $r-a
= by$, for some $b \in R$. It follows that $x^nby = ay$, so
$(x^nb-a)y = 0$ and $x^nb-a \in (0):y \subseteq \n$.
Therefore $a = x^nb + c$, where $c \in \n$. Hence $r = bx^n + by + c
\in \overline{Q_n}$. We conclude that $g(Q_n) \ge n$, and therefore
that $\mathcal G(R)$ is infinite.
\end{proof}

\begin{corollary} \label{3.71}
Let $(R,\m)$ be a one-dimensional Cohen-Macaulay local ring such that $\m$
is minimally $2$-generated.
The following are equivalent:
\begin{enumerate}
\item  
$\mathcal G(R)$ is finite.
\item 
The $\m$-adic completion $\widehat R$ of $R$ is reduced.
\item 
$\overline{R}$ is module-finite over $R$.
\end{enumerate}
%If $\mathcal G(R)$ is finite, then the
%$\m$-adic completion $\widehat R$ of $R$ is reduced and $\overline{R}$ is
%module-finite over $R$.
\end{corollary}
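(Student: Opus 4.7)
The strategy is to run the cycle $(2) \Rightarrow (1)$, $(1) \Rightarrow (2)$, and $(2) \Leftrightarrow (3)$, piggybacking on the machinery already in place. The implication $(2) \Rightarrow (1)$ is immediate from Corollary~\ref{3.50}: if $\widehat R$ is reduced then its nilradical $\n$ is zero and $\ell_{\widehat R}(\n) = 0$ is finite. For $(2) \Leftrightarrow (3)$, the core content is Remark~\ref{3.51}, which is stated there only for reduced $R$; to remove that hypothesis, note that $R$ is one-dimensional Cohen--Macaulay so $\m$ contains a nonzerodivisor and $R$ differs from its total quotient ring. Then the footnote of Remark~\ref{3.2} shows that (3) already forces $R$ reduced, while (2) forces $R$ reduced because $R \hookrightarrow \widehat R$ by Krull's intersection theorem. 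Either direction reduces to the reduced case, where Remark~\ref{3.51} gives the equivalence.

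The substance of the corollary is $(1) \Rightarrow (2)$, which I plan to prove by contrapositive: assuming $\widehat R$ is non-reduced, I will exhibit a $y \in \widehat R$ to which Theorem~\ref{3.7} applies, and then conclude $\mathcal G(R) = \mathcal G(\widehat R)$ is infinite via Remark~\ref{2.5}. The 2-generator hypothesis is essential here. Since $\mu(\widehat{\m}) = \mu(\m) = 2$, Cohen's structure theorem presents $\widehat R$ as $S/I$ for a two-dimensional complete regular local ring $S$; as $S$ is a UFD, every height-one prime of $S$ is principal. Because $\widehat R$ is Cohen--Macaulay of dimension one, $I$ has pure height one with no embedded primes, and admits a primary decomposition $I = Q_1 \cap \cdots \cap Q_r$ with $\sqrt{Q_i} = (f_i)$. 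Non-reducedness of $\widehat R$ means $Q_j \subsetneq (f_j)$ for some $j$; fix such a $j$, set $f = f_j$, $P = (f)$, $Q = Q_j$, and let $y$ be the image of $f$ in $\widehat R$. Then $\widehat R/y\widehat R \cong S/(f)$ is a one-dimensional local domain, hence Cohen--Macaulay, giving the first hypothesis of Theorem~\ref{3.7}.

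The remaining task is to verify $(0):y \subseteq \n$. One has $(0):y = (I:f)/I$ and $I:f = \bigcap_i (Q_i:f)$. For $i \ne j$, $f \notin (f_i)$, so primariness gives $(Q_i:f) = Q_i \subseteq P_i$. For the distinguished component, $Q \subsetneq (f)$ forces $f \notin Q$, so if some $a \in (Q:f)$ lay outside $P$, primariness of $Q$ would force $f \in Q$, a contradiction; hence $(Q:f) \subseteq P$. Intersecting yields $I:f \subseteq \bigcap_i P_i$, which is precisely the preimage of $\n$ in $S$, so $(0):y \subseteq \n$. Theorem~\ref{3.7} then gives $\mathcal G(\widehat R)$ infinite, completing the contrapositive. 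The main obstacle is arranging $y$ so that $\widehat R/y\widehat R$ is Cohen--Macaulay \emph{and} $(0):y$ stays in the nilradical; the 2-generator hypothesis is what makes the Cohen presentation available, inside which the primary decomposition supplies exactly the right~$y$.
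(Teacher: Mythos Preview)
Your proof is correct and follows essentially the same route as the paper. Both arguments prove $(1)\Rightarrow(2)$ by contrapositive, pass to $\widehat R = S/I$ with $S$ a two-dimensional regular local ring via Cohen's structure theorem, use the Cohen--Macaulay hypothesis to see that $I$ is unmixed of height one, and then apply Theorem~\ref{3.7} with $y$ the image of a prime generator $f_j$ of an associated prime occurring with multiplicity. The only cosmetic difference is that the paper observes directly that $I$ is principal (so $I=(p_1^{e_1}\cdots p_s^{e_s})$ and one simply takes $y=p_i$ with $e_i>1$), whereas you work with the primary decomposition $I=\bigcap Q_i$; since $I\subseteq Q_j\subseteq (f_j)$, your computation $\widehat R/y\widehat R\cong S/(f_j)$ goes through either way, and your verification that $(0):y\subseteq\n$ is a more explicit unpacking of what the paper leaves implicit. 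The cycle among the three conditions is organized slightly differently (the paper does $(2)\Rightarrow(3)\Rightarrow(1)$ via an external citation and Remark~\ref{3.2}/Corollary~\ref{3.50}, you do $(2)\Rightarrow(1)$ directly and $(2)\Leftrightarrow(3)$ via Remark~\ref{3.51}), but the content is the same.
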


\begin{proof}
Assume (1).
By Remark~\ref{2.5},
$\mathcal G(\widehat R)$ is finite.
%we may assume that $R = \widehat R$ is complete.
The structure theorem for complete local rings \cite[(31.1)]{N} implies
that $\widehat R$ is a homomorphic image of a complete regular local
ring.  Since
$\m$ is minimally $2$-generated, we obtain  $\widehat R = S/I$, where $S$ is a $2$-dimensional
regular local ring. Since $\widehat R$ is Cohen-Macaulay and $\dim \widehat R = 1$,
the ideal $I$ is of the form
$I = (p_1^{e_1} \cdots p_t^{e_s})$,
where $p_1, \ldots, p_s$ are non-associate prime elements
and $e_1, \ldots, e_s$ are positive integers.
If $e_i > 1$ for some $i$,
then Theorem~\ref{3.7} applied to $y = p_i$ shows that $\mathcal G(\widehat R)$
is infinite,
which is a contradiction.
So necessarily all $e_i$ equal $1$,
%and $\widehat R$ is reduced.
%This proves (2).
which proves (2).
%product of non-associate prime elements. Hence $\overline{R}$ is
%module-finite over $R$.
The implication (2) $\Longrightarrow$ (3)
follows say from \cite[Corollary~4.6.2]{SHbk},
and (3) $\Longrightarrow$ (1)
follows from Remark~\ref{3.2} and Corollary~\ref{3.50}.
\end{proof}

\begin{example} Let $S$ be a $3$-dimensional regular local ring with
maximal ideal $(u,v,w)S$. Let $I = (u,w)S \cap (v^2, u-w)S$ and let
$R = S/I$. Notice that $vR$ is
a nonzero principal ideal such that $R/vR$ is one-dimensional and
Cohen-Macaulay and such that $(0) :_R v$ is contained in the nilradical.
By Theorem~\ref{3.7}, $\mathcal G(R)$ is infinite.
\end{example}

We record in Proposition~\ref{anotherinfinite} a general 
ideal-theoretic condition that implies $\mathcal G(R)$ is infinite.

\begin{proposition} \label{anotherinfinite}
Let $(R,\m)$ be a one-dimensional Noetherian local ring,
and let $x, y$ be elements of $R$
such that for all $n$,
$y + x^n$ is a parameter.
Assume that for all $n$,
$(y) : x^n \subseteq \overline{(y + x^n)}$
and $(x^n) : y \subseteq \overline{(y + x^n)}$.
Then $\mathcal G(R)$ is infinite.
\end{proposition}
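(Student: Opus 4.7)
The plan is to adapt the proof of Theorem~\ref{3.7} almost verbatim, using the two given integral-closure inclusions to replace the structural hypotheses of that theorem. Set $Q_n := (y+x^n)R$, which is a parameter ideal by assumption, and I will show $g(Q_n) \ge n$ for every positive integer $n$; this makes $\mathcal G(R)$ unbounded.

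A preliminary remark is that $x$ must lie in $\m$. Otherwise $x$ is a unit, so $x^nR = R$ and $(x^nR) :_R y = R$; the hypothesis $(x^n):_R y \subseteq \overline{Q_n}$ would then give $1 \in \overline{Q_n}$, whence $Q_n = R$, contradicting that $Q_n$ is a proper parameter ideal.

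Now fix $n$ and let $r \in Q_n :_R \m^n$. Since $x^n \in \m^n$, we have $rx^n \in Q_n$, so $rx^n = a(y+x^n)$ for some $a \in R$. This equation yields $(r-a)x^n = ay$, which says simultaneously that $r - a \in yR :_R x^n$ and $a \in x^nR :_R y$. Applying the two hypotheses, both $r - a$ and $a$ lie in $\overline{Q_n}$, hence so does $r$. Thus $Q_n :_R \m^n \subseteq \overline{Q_n}$, so $g(Q_n) \ge n$.

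There is no real obstacle: the hypotheses are tailored precisely to substitute for the two places in the proof of Theorem~\ref{3.7} where that argument used the Cohen-Macaulayness of $R/yR$ (to solve for $r-a$ inside $yR$) and the containment $(0):y \subseteq \n \subseteq \overline{Q_n}$. The only genuinely new point is the verification that $x \in \m$, which is needed so that $x^n \in \m^n$.
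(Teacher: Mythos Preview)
Your proof is correct and follows the paper's argument essentially line for line: show $g(Q_n)\ge n$ by taking $r\in Q_n:\m^n$, writing $rx^n=a(y+x^n)$, and concluding $r-a\in (y):x^n\subseteq\overline{Q_n}$ and $a\in (x^n):y\subseteq\overline{Q_n}$. The only difference is that you explicitly verify $x\in\m$ (needed so that $x^n\in\m^n$), a point the paper passes over in silence; your justification via $(x^n):y=R$ forcing $\overline{Q_n}=R$ is sound and a welcome addition.
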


\begin{proof}
We prove that $g(y + x^n) \ge n$.
Let $r \in (y+x^n) : \m^n$.
Then $r x^n = a (y + x^n)$ for some $a \in R$.
Then $r - a \in ((y) : x^n) \subseteq \overline{(y + x^n)}$
and $a \in ((x^n): y) \subseteq \overline{(y + x^n)}$,
so that $r \in \overline{(y + x^n)}$.
\end{proof}

We have demonstrated in Remark~\ref{3.51} the existence  of
one-dimensional Noetherian local domains $(R,\m)$ for which the set
$\mathcal G(R)$ of Goto numbers of parameter ideals is infinite.  A
question here that remains open is:

\begin{question} \label{3.52} Let $(R,\m)$ be a one-dimensional
Noetherian local ring. If the set $\mathcal G(R)$ is finite, does
$R$ satisfy the equivalent conditions of Theorem~\ref{3.5} ?
\end{question}

Theorem~\ref{3.7} implies an affirmative  answer to
Question~\ref{3.52} if $R$ is Cohen-Macaulay and $\m$ is
$2$-generated.

In  Proposition \ref{3.8} we obtain  an upper bound on the Goto
numbers  of parameter ideals contained in the conductor in the case
where $R$ is Gorenstein. We thank YiHuang Shen for helpful comments
regarding  Proposition~\ref{3.8}.

\begin{proposition} \label{3.8} Let $(R,\m)$ be a one-dimensional Gorenstein
local reduced ring such that $\overline R$ is module-finite over
$R$. Let $C = R :_R \overline R$ be the conductor of $R$ in
$\overline{R}$, and let $Q = qR$ be a parameter ideal contained in
$C$. Then
$$
g(Q) = \max\{i ~| ~ C \subseteq \m^i + ~  Q \}.
$$
\end{proposition}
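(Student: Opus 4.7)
The plan is to deduce the statement from Proposition~\ref{1.9} by identifying the ideal $J = Q : \overline{Q}$ appearing there with the conductor $C$. The hypothesis $Q \subseteq C$ is doing all the work: it makes both $\overline Q$ and $J$ easy to compute explicitly.

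First I would compute $\overline Q$ under the hypothesis $Q = qR \subseteq C$. Since $q \in C$ we have $q\overline R \subseteq R$, and therefore the formula $\overline Q = q\overline R \cap R$ recorded in Remark~\ref{3.2} simplifies to $\overline Q = q\overline R$. Next I would compute $J = qR :_R q\overline R$. Because $Q$ is a parameter ideal of the reduced ring $R$, the generator $q$ is a nonzerodivisor, so cancelling $q$ in the colon formula yields
$$J = \{r \in R : r\overline R \subseteq R\} = C.$$

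At this point Proposition~\ref{1.9} applies (in the nontrivial case $Q \subsetneq \overline Q$) and immediately gives
$$g(Q) = \max\{i : J \subseteq \m^i + Q\} = \max\{i : C \subseteq \m^i + Q\}.$$
The degenerate case $Q = \overline Q$ occurs precisely when $R = \overline R$, i.e., $R$ is a DVR; then $C = R$, both sides of the asserted formula equal $0$, and the statement holds trivially.

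I do not anticipate a serious obstacle: once one observes that containment in the conductor collapses the colon $qR : q\overline R$ onto the conductor itself, the proposition is essentially a repackaging of Proposition~\ref{1.9}. The only point requiring care is verifying that $\overline Q = q\overline R$ rather than the a priori larger intersection $q\overline R \cap R$, and this is immediate from $q \in C$.
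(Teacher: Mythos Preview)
Your proof is correct and follows essentially the same approach as the paper: both arguments identify $J = Q : \overline Q$ with the conductor $C$ and then invoke Proposition~\ref{1.9}. Your computation of $J = C$ by directly cancelling the regular element $q$ from $qR :_R q\overline R$ is somewhat cleaner than the paper's version, and you also explicitly dispose of the degenerate case $Q = \overline Q$, which the paper leaves implicit.
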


\begin{proof}
Since $q \in C$, we have $\overline Q = q\overline R \subseteq C$.
Also, $qC = qC\overline R$, so $QC = \overline Q C$.
Hence $C \subseteq (Q:\overline Q$).
Let $r \in (Q : \overline Q)$.
Then $\frac{r}{q} \overline Q \subseteq R$.
Let $w \in \overline R$.
Then $qw \in C \overline R \cap Q \overline R
\subseteq R \cap Q \overline R = \overline Q$,
whence
$r w = \frac{r}{q} q w \in \frac{r}{q} \overline Q \subseteq R$,
so that $r \in C$.
This proves that $(Q : \overline Q) \subseteq C$
and hence $(Q : \overline Q) = C$.
Now the proposition follows from Proposition~\ref{1.9}.
\end{proof}

\begin{remark} \label{3.9}
The conclusion of Proposition \ref{3.8} fails if $R$ is not assumed
to be Gorenstein. Let $R$ be a numerical semigroup ring associated
to the numerical semigroup generated by $4, 5, 11$. The conductor $C
= R :_R \overline R = x^8\overline R$, and $Q = x^{12}R$ is a
parameter ideal contained in $C$. The Goto number $g(x^{12}) = 2$,
but we have $\max\{i ~ | ~ C \subseteq \m^i + ~ x^{12}R\} = 1$,
because $x^{11} \notin \m^2 + ~x^{12}R$.
\end{remark}

\section{Numerical semigroup rings} \label{sectQ}

This section provides lower and upper bounds on the Goto numbers
of parameter ideals in numerical semigroup rings.

Let $(R,\m)$ be a numerical semigroup ring associated to a rank-one
discrete valuation ring $V$ as in (\ref{1.3}) and let $G$ be the
numerical semigroup associated to $R$.  Assume that $R \subsetneq
V$, or, equivalently, that $G$ is minimally generated by positive
integers $a_1, \ldots, a_d$, with $1 < a_1 < \cdots < a_d$ and
$\gcd(a_1, \ldots, a_d) = 1$. Necessarily $d > 1$, and $\m =
(x^{a_1}, \ldots, x^{a_d})R$ is minimally generated by $x^{a_1},
\ldots, x^{a_d}$.

\begin{theorem} \label{4.1} Let $f$ denote the Frobenius number of
the numerical semigroup ring~$R$. Then
$$
g(x^{f+a_1+1}) = \min\{ g(Q) ~|~ Q \mbox{ is a parameter ideal of }
R \}.
$$
Moreover,  for all $e \ge f + a_1 + 1$, we have $ g(x^e) =
g(x^{f+a_1+1})$.
\end{theorem}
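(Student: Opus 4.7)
The plan is to derive the theorem as a direct corollary of Theorem~\ref{3.4}, applied with the specific choices $x := x^{a_1}$ (an element of $\m$) and $y := x^{f+1}$ (an element of the conductor $C$). First I would verify these choices are legitimate. Both elements are nonzero nonunits of the one-dimensional domain $R$, so each generates a parameter ideal. By Remark~\ref{1.4}(2), the conductor of $R$ in $V$ is $C = x^{f+1}V$; since every integer $m \ge f+1$ lies in the value semigroup $G$, we have $x^m \in R$ for all such $m$, and in particular $x^{f+1} \in C$.

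With $xy = x^{a_1+f+1}$, the first sentence of Theorem~\ref{3.4} together with its last sentence immediately gives that $g(x^{a_1+f+1})$ equals the minimal Goto number of a parameter ideal of $R$, which establishes the first assertion.

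For the second assertion, I must show for each $e \ge f+a_1+1$ that $x^e R \subseteq x^{a_1}C$; the second sentence of Theorem~\ref{3.4} will then give $g(x^e) = g(x^{a_1+f+1})$. The inclusion is immediate: write $x^e = x^{a_1}\cdot x^{e-a_1}$, and observe that $e - a_1 \ge f+1$ forces $x^{e-a_1} \in C$ by the same Frobenius-number argument as above. There is no serious obstacle here; the proof is essentially a matter of recognizing that $x^{f+1}$ is the canonical element of $C$ and then invoking Theorem~\ref{3.4}.
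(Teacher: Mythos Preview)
Your proposal is correct and follows essentially the same approach as the paper, which simply notes that the conductor is $C = x^{f+1}V$ and says ``Apply Theorem~\ref{3.4}.'' You have spelled out the choices $x := x^{a_1}$, $y := x^{f+1}$ and the verification that $x^eR \subseteq x^{a_1}C$ for $e \ge f+a_1+1$, which is exactly the unpacking the paper leaves to the reader.
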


\begin{proof} The conductor $C$ of $R$ into $\overline R = V$
is $C = x^{f+1}V$.   Apply Theorem \ref{3.4}.
\end{proof}

The lower  bound for $e$ given in Theorem \ref{4.1} is sharp: if $G
= \langle 9, 19 \rangle$,  then $f = 143$, $a_1 = 9$, $f + a_1 + 1 =
153$, and $g(x^{152}) = 9 > \min\{g(x^{a_i}) : i = 1, \ldots, d\}
=~8$.

\begin{remark} \label{4.2}
Corollary \ref{1.7} implies  that,  for all $e \ge f + a_1 + 1$, one
has
\begin{eqnarray} \label{4.11}
g(x^e) \le \min\{g(x^{a_i}) ~|~ i = 1, \ldots, d\}.
\end{eqnarray}
We prove  equality holds in (\ref{4.11}) in the case where  $d =
2$ in Theorem~\ref{5.10} below. However, YiHuang Shen has pointed
out that strict inequality may hold in display~(\ref{4.11}) if $d
\ge 3$.  In particular, for the semigroup $\langle 7, 11, 20
\rangle$, one has $g(x^7) = 4$, $g(x^{11}) = 4$ and  $g(x^{20}) =
5$, while  $g(x^{45}) = 3$. Similar strict inequalities occur for
the semigroups $\langle 8, 11, 15 \rangle$, $\langle 9, 14, 17
\rangle$, $\langle 10, 13, 18 \rangle$.  Even in the case where $d =
3$ and the numerical semigroup is symmetric, YiHuang Shen has found
examples where strict inequality holds in display~(\ref{4.11}).
For the symmetric numerical semigroup $\langle 11,14,21 \rangle$,
one has $g(x^{11}) = 6$, $g(x^{14}) = 6$ and $g(x^{21}) = 7$, while
$g(x^{85}) = 5$.
\end{remark}

\begin{proposition} \label{4.31}
Let $(R,\m)$ be a numerical semigroup ring associated to a rank-one
discrete valuation ring $V$ as in (\ref{1.3}) and let $G$ be the
value semigroup of $R$. Then
\begin{eqnarray} \label{4.32}
\sup\{g(x^e) ~|~ e \in G\} ~~=~~
\max\{g(x^{a_j}) ~|~ j = 1, \ldots, d\}.
\end{eqnarray}
\end{proposition}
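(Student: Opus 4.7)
The plan is to reduce the proposition to a routine application of Corollary~\ref{1.7}. The inequality $\ge$ is immediate: each $a_j$ lies in $G$ and is positive, so $x^{a_j}R$ is itself one of the parameter ideals of the form $x^eR$ with $e\in G$, and hence the right-hand side is bounded above by the left-hand side.

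For the reverse inequality, I would take an arbitrary positive $e\in G$ and use that $a_1,\ldots,a_d$ generate the semigroup $G$ to write
\[
e \;=\; a_{i_1}+a_{i_2}+\cdots+a_{i_k}
\]
for some indices $i_1,\ldots,i_k\in\{1,\ldots,d\}$ and some $k\ge 1$. Since multiplication of principal ideals is additive in the exponent, this factorization translates into the ideal identity $x^eR = (x^{a_{i_1}}R)(x^{a_{i_2}}R)\cdots(x^{a_{i_k}}R)$, and each factor is a parameter ideal of $R$.

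If $k=1$, then $e=a_{i_1}$ and trivially $g(x^e)=g(x^{a_{i_1}})\le \max_j g(x^{a_j})$. If $k\ge 2$, I would apply Corollary~\ref{1.7} inductively, regrouping as $x^eR=(x^{a_{i_1}}R)\cdot(x^{a_{i_2}+\cdots+a_{i_k}}R)$, where the second factor is again a parameter ideal since its exponent is a positive element of $G$. Corollary~\ref{1.7} then gives
\[
g(x^e)\;\le\;\min\bigl\{g(x^{a_{i_1}}),\, g(x^{a_{i_2}+\cdots+a_{i_k}})\bigr\}\;\le\;g(x^{a_{i_1}})\;\le\;\max_{1\le j\le d} g(x^{a_j}),
\]
which is the desired bound. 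Taking the supremum over $e$ completes the proof.

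There is essentially no obstacle: the only point that needs a moment of care is checking that the intermediate exponents appearing in the regrouping lie in $G$ (so that the corresponding factors are honest parameter ideals of $R$ rather than ideals of $V$), and this is automatic because $G$ is closed under addition. Thus the content of the proposition is simply that Corollary~\ref{1.7} extends from pairs to arbitrary finite products of parameter ideals of the form $x^{a_j}R$.
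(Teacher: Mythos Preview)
Your argument is correct and is exactly the approach the paper takes: the paper's proof consists of the single sentence ``Apply Corollary~\ref{1.7},'' and you have simply written out the routine induction that this entails. There is nothing to add.
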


\begin{proof} Apply Corollary \ref{1.7}.
\end{proof}

We clearly have
\begin{eqnarray} \label{4.33}
\sup\{g(x^e) : e \in G\} \le \sup\{g(Q) ~|~ \hbox{ $Q$ a parameter
ideal in $R$}\}.
\end{eqnarray}
Strict inequality may hold in display (\ref{4.33}) as we demonstrate  in
Example \ref{4.4}.

\begin{example} \label{4.4}  Let $(R,\m)$ be a numerical semigroup ring associated
to the semigroup $G = \langle 4,7,9 \rangle$.  Then $(x^4) : \m^3$
contains $1$,~ $(x^7) : \m^3$ contains $x^4$, and  $(x^9) : \m^3$
contains $x^8$. Therefore display (\ref{4.32}) implies that
$\sup\{g(x^e) : e \in G\} \le 2$. However, $(x^7+x^8+x^9) : \m^3 =
(x^7+x^8+x^9, x^7+x^9+x^{11},x^7-x^{13},x^7-x^{16}, x^9-x^{18})$, so
that $g(x^7+x^8+x^9) \ge 3$.
\end{example}

We  are interested in bounds for the Goto numbers of arbitrary
parameter ideals of a numerical semigroup ring.  Theorem~\ref{4.1}
gives a  general lower bound. Proposition~\ref{4.5} gives a relative
lower bound for each parameter ideal in terms of the Goto number of
the monomial parameter ideal with the same integral closure.

\begin{proposition} \label{4.5}
Let $(R,\m)$ be a numerical semigroup ring associated to a rank-one
discrete valuation ring $V$ as in (\ref{1.3}) and let $G$ be the
value semigroup of $R$. Let $Q = qR$ be a parameter ideal of $R$.
Then $q = ux^b$, where $b \in G$ and $u$ is a unit of $V$, and we
have $g(Q) \ge g(x^b)$.
\end{proposition}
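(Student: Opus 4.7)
The first assertion, that $q = ux^b$ with $b\in G$ and $u\in V^\times$, is immediate from $V$ being a DVR with uniformizer $x$: take $b = v(q)$, which lies in $G$ since $q\in R$, and $u = q/x^b\in V^\times$.

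For the inequality $g(Q)\ge g(x^bR)$, my plan is to transport the Goto condition on $x^bR$ to $qR$ via multiplication by the unit $u$ inside $V$. First note that because $u\in V^\times$, we have $qV = ux^bV = x^bV$, so by Remark~\ref{1.4}(7) the integral closures agree:
\[
\overline{Q} \;=\; qV\cap R \;=\; x^bV\cap R \;=\; \overline{x^bR}.
\]
Set $g = g(x^bR)$, so the hypothesis reads $x^bR:\m^g\subseteq\overline{x^bR}$. It suffices to show $qR:\m^g\subseteq\overline{qR}=\overline{x^bR}$, equivalently that $v(r)\ge b$ for every $r\in qR:\m^g$. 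For such $r$, I rewrite the containment $r\m^g\subseteq qR = u\cdot x^bR$ (as subsets of $V$) in the form $(u^{-1}r)\m^g\subseteq x^bR$; hence the element $u^{-1}r\in V$ multiplies $\m^g$ into $x^bR$.

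The obstacle is that $u^{-1}r$ generally lies in $V\setminus R$, so the hypothesis on $x^bR:\m^g$, which concerns elements of $R$, cannot be applied directly. To bridge this gap I would use the unit decomposition of Remark~\ref{1.4}(5)--(6): expand $u^{-1}r$ as a combination $\sum_{i\ge 0} w_i x^{v(r)+i}$ with $w_i\in R^\times\cup\{0\}$, and let $r^*\in R$ be its $R$-truncation, namely the partial sum over those indices with $v(r)+i\in G$. Then $r^*\in R$ with $v(r^*)=v(r)$, and the Goto hypothesis for $x^bR$, once applied to $r^*$, yields $v(r^*)\ge b$, whence $v(r)\ge b$ and $r\in\overline{x^bR}=\overline{qR}$, as required.

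The hardest step is verifying that the truncation $r^*$ does inherit the colon condition $r^*\m^g\subseteq x^bR$, i.e., that the ``tail'' $u^{-1}r-r^*$ (whose terms have exponents outside $G$) also multiplies $\m^g$ into $x^bR$. This needs a careful combinatorial check of which exponents occur in $R$-expansions of $r$, $u$, and the generators of $\m^g$, using the semigroup constraints recorded in Remark~\ref{1.4}(5)--(6); once this bookkeeping is done, the two-step passage from $qR$ to $x^bR$ via $u^{-1}$ and then to $R$ via truncation completes the proof.
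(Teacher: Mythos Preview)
Your argument has a genuine gap at exactly the point you flag as ``the hardest step'': you never verify that the truncation $r^*$ satisfies $r^*\m^g\subseteq x^bR$. The difficulty is real, because the tail $u^{-1}r-r^*$ is a $V$-linear combination of powers $x^e$ with $e\notin G$, and there is no a priori reason those terms, multiplied by a monomial generator $x^a$ of $\m^g$, land in $x^bR$ (or even in $R$) individually. Asserting that ``bookkeeping'' will handle this is not a proof.

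The paper's argument shows that the whole truncation apparatus is unnecessary. Instead of building $r^*$, use the pure monomial $x^c$ with $c=v(r)$ as the witness in $R$. Write $r=wx^c$ with $w\in V^\times$. For any monomial generator $x^a$ of $\m^i$, the hypothesis $r\m^i\subseteq ux^bR$ gives $wx^{c+a}=ux^b s$ for some $s\in R$; taking $v$ yields $c+a-b=v(s)\in G$, so $x^cx^a=x^b\cdot x^{c+a-b}\in x^bR$. Thus $x^c\in (x^bR:\m^i)$. If $i\le g(x^b)$ this forces $c\ge b$, i.e.\ $r\in\overline{x^bR}=\overline{Q}$, proving $g(Q)\ge g(x^b)$. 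The point is that only the \emph{valuation} of $r$ matters, so the one-term element $x^{v(r)}$ already transfers the colon condition from $Q$ to $x^bR$; there is no need to approximate $u^{-1}r$ inside $R$.
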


\begin{proof}
Let $r = wx^c \in R$, where $w$ is a unit of $V$ and $c \in G$ with
$c < b$.
It suffices to prove for each positive integer $i$ that $wx^c\m^i
\subseteq ux^bR$ implies that $x^c\m^i \subseteq x^bR$.
Now $\m^i$ is
generated by elements of the form $x^a$, where $a \in G$.
Using part
(5) of Remark \ref{1.4}, we see that $wx^cx^a \in ux^bR$ implies that $c+a-b \in G$, and
this implies that  $x^{c+a} \in x^bR$.
\end{proof}

With notation as in Proposition \ref{4.5}, it may happen that $g(Q)
> g(x^b)$ even in the case where $R$ is Gorenstein and $b > f$, as
we demonstrate in Example \ref{4.6}.

\begin{example} \label{4.6}
Let $(R,\m)$ be a numerical semigroup ring associated to the
semigroup $G = \langle 5,11 \rangle$.  Then $f = 39$, and $g(x^{40})
= 4 < g(x^{40} + x^{44}) = 5$. Note that $x^b = x^{40}$ and $ux^b =
x^{40} + x^{44}$ are in the conductor $C$ of $R$ in $V$.
\end{example}

Theorem \ref{4.7}  is due to Lance Bryant. It gives an upper bound
on the Goto number of parameter ideals.

\begin{theorem} \label{4.7}
Let $(R,\m)$ be a numerical semigroup ring associated to a rank-one
discrete valuation ring $V$ as in (\ref{1.3}). Assume that  $G$ is
minimally generated by  $a_1, \ldots, a_d$, with $1 < a_1 < \cdots <
a_d$, and let $f$ be the Frobenius number of $R$. For all parameter
ideals $Q$ in $R$, we have
\begin{eqnarray} \label{4.77}
g(Q) ~~ \le  ~~ \left\lfloor\frac{f}{a_1}\right\rfloor + 1~~ = ~~\left\lceil\frac{f}{a_1}\right\rceil.
\end{eqnarray}
\end{theorem}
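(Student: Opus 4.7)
The plan is to prove that $(Q:\m^{g+1}) \not\subseteq \overline{Q}$ for $g = \lfloor f/a_1\rfloor + 1$, which immediately gives $g(Q) \le g$. Write $Q = qR$ and $b = v(q)$; by Remark~\ref{1.4}(7), $\overline{Q} = \{r \in R : v(r) \ge b\}$, so it suffices to exhibit an element $z \in R$ with $v(z) < b$ and $z\m^{g+1} \subseteq qR$. Two quick observations I would record first: since $a_1 \in G$ while $f \notin G$, we have $a_1 \nmid f$, so $g = \lfloor f/a_1\rfloor + 1 = \lceil f/a_1 \rceil$ (this matches the two expressions in~(\ref{4.77})) and in particular $ga_1 \ge f+1$. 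Also, $q \in \m$ forces $b \in G$ with $b > 0$, whence $b \ge a_1$.

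The key construction is $z := x^{ka_1}$, where $k := \lfloor (b-1)/a_1\rfloor \ge 0$. Since $ka_1$ is a nonnegative multiple of $a_1 \in G$, it lies in $G \cup \{0\}$, so $z \in R$; and $ka_1 \in [b-a_1,\; b-1]$ gives $v(z) < b$, hence $z \notin \overline{Q}$. To check $z\m^{g+1} \subseteq qR$, observe that because $\m$ is generated by $x^{a_1},\ldots,x^{a_d}$ with $a_1$ minimal, every $y \in \m^{g+1}$ satisfies $v(y) \ge (g+1)a_1$. Consequently
\[
v(zy) - b \;\ge\; (b-a_1) + (g+1)a_1 - b \;=\; ga_1 \;\ge\; f+1,
\]
so $zy/q$ has valuation at least $f+1$ in $V$ and therefore lies in the conductor $C = x^{f+1}V \subseteq R$. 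Hence $zy \in qR$, as required.

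The step I would expect to be the main obstacle is the case $b - a_1 \notin G$ (for example, $b = a_2$ when $a_2 - a_1$ is a gap of $G$), where $R$ contains no element of valuation exactly $b - a_1$, and so the naive candidate $z = q/x^{a_1}$ need not lie in $R$. This is sidestepped by taking $v(z)$ to be the largest multiple of $a_1$ strictly less than $b$: multiples of $a_1$ always lie in $G \cup \{0\}$ and populate every interval of $a_1$ consecutive nonnegative integers, guaranteeing such a $z$ in $[b-a_1, b-1]$, while the conductor bound $ga_1 \ge f+1$ absorbs the remaining slack in the valuation estimate.
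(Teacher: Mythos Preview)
Your proof is correct and follows essentially the same approach as the paper's: exhibit an element $z\in R$ with $v(z)\in[v(q)-a_1,\,v(q)-1]$, so that $z\notin\overline{Q}$ yet $z\,\m^{g+1}$ lands in $qC\subseteq qR$ via the conductor bound $ga_1>f$. The only cosmetic difference is the choice of $z$: the paper takes $z=x^{b}$ with $b$ the largest element of $G$ strictly below $v(q)$, while you take $z=x^{ka_1}$ with $ka_1$ the largest multiple of $a_1$ strictly below $v(q)$---your choice is a specific witness to the paper's inequality $v(q)-b\le a_1$, and both work for the same reason.
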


\begin{proof}
Let $Q$ be a parameter ideal of $R$.  As observed in
Remark~\ref{1.4}, $Q = ux^cR$, where $c \in G$, and  $u = 1 +
\sum_{i=1}^f u_i x^i$, where each of the $u_i$ is either zero or a
unit  of $R$. Let $m  = \left\lfloor\frac{f}{a_1}\right\rfloor + 1$.
It suffices to prove that $Q : \m^{m+1}$ contains an element that is
not integral over $Q$. Since $Q$ is a parameter ideal, $c > 0$. Let
$b$ be the largest element in $G$ that is strictly smaller than $c$.
Then $c - b \le a_1$, so $b - c \ge -a_1$. Let $e_i$ be positive
integers such that $\sum e_i = m + 1$. Then
$$
b + \sum e_i a_i - c ~~\ge ~~b + \sum e_i a_1 - c ~~\ge ~~(m) a_1 ~~
> ~~  f. $$ Therefore  $x^b \m^{m+1} \subseteq (x^c)C$,
where $C = R_{> f}$ is the conductor of $R$ in $V$.  Since $x^cC =
(ux^c) u^{-1}C$ and $u^{-1}C \subseteq R$, we have $x^b\m^{m+1}
\subseteq Q$, so $x^b \in Q:\m^{m+1}$. Since $b < c$, the element
$x^b$ is not integral over $Q = ux^cR$. This completes  the proof.
\end{proof}

\begin{remark} \label{4.8}
Let $(R,\m)$ be a numerical semigroup ring associated to the
semigroup $G = \langle a_1, a_2 \rangle$. In Theorem~\ref{5.5}
below, we prove that the Goto number $g(x^{a_2}) =
\left\lfloor\frac{a_2 - b_2 + f }{ a_1}\right\rfloor = a_2 - 1 -
\left\lfloor\frac{a_2 - 1 }{ a_1}\right\rfloor$ is a  sharp upper
bound for the Goto number of  monomial parameter ideals of $R$.
Theorem~\ref{4.7} implies that
$$
\sup\{ g(Q) ~|~ Q \mbox{ is a parameter ideal of } R \} \le
\left\lfloor\frac{f}{a_1}\right\rfloor + 1.
$$
It is well known  that if $G = \langle a_1, a_2 \rangle$, then the
Frobenius number $f = a_1a_2 - a_1 - a_2$, cf.
\cite[Example~12.2.1]{SHbk}. Thus $
\left\lfloor\frac{f}{a_1}\right\rfloor + 1  =
\left\lfloor\frac{a_1a_2-a_1-a_2}{a_1}\right\rfloor + 1$. Since
$\left\lfloor\frac{-a_2}{a_1}\right\rfloor = -\lceil\frac{a_2
}{a_1}\rceil$ and $a_1$ and $a_2$ are relatively prime,   we see
that $g(x^{a_2}) = \left\lfloor\frac{f}{a_1}\right\rfloor + 1$.
Therefore, in the case where $d = 2$, the upper bound given in
Theorem~\ref{4.7} is a sharp upper bound for the Goto numbers of
parameter ideals of $R$,   and this upper bound is attained by the
monomial parameter ideal $(x^{a_2})$.
\end{remark}

\begin{remark} \label{4.9} The upper bound
$g(Q) \le   \left\lceil\frac{f}{a_1}\right\rceil$ given in
Theorem~\ref{4.7} is not always a sharp upper bound for the Goto
numbers of parameter ideals of  a numerical semigroup ring. YiHuang
Shen has constructed a family of examples that illustrate this, the
simplest example being $G = \langle 4, 6, 7 \rangle$. If $(R,\m)$ is
a numerical semigroup ring associated to the semigroup $G = \langle
4, 6, 7 \rangle$, then  $f = 9$, so $3$ is the upper bound given by
Theorem~\ref{4.7}, while $g(Q) = 2$ for each parameter ideal $Q$
of $R$.
\end{remark}

\section{Numerical semigroup rings -- monomial ideals}
\label{numsemi}

As in Section \ref{sectQ}, let $(R,\m)$ be a numerical semigroup
ring associated to a rank-one discrete valuation ring $V$, and let
$G = \langle a_1, \cdots, a_d \rangle $ be the numerical semigroup
associated to $R$. In this section we establish  bounds for the Goto
numbers of monomial parameter ideals in $R$.  It is well known that
numerical semigroups follow varied patterns that are difficult to
classify precisely. For example, in the case where $d \ge 4$, there
is no known closed formula for the Frobenius number of $R$ in terms
of the minimal generators $a_1, \ldots, a_d$ of $G$.

\begin{proposition} \label{5.1}
Let $(R,\m)$ be a numerical semigroup ring associated to the
semigroup $G = \langle a_1,\cdots, a_d \rangle$, and let $f$ denote
the Frobenius number of $R$. For each $j > 1$, we have
\begin{eqnarray} \label{5.11}
g(x^{a_j}) \le \left\lfloor\frac{a_j - b_j + f }{a_1}\right\rfloor,
\end{eqnarray}
where $b_j$ is the largest element of $G$ that is strictly smaller
than $a_j$.
\end{proposition}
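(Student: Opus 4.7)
The plan is to follow the strategy used in the proof of Theorem~\ref{4.7}, tailored so that the exhibited witness yields the sharper bound involving $a_j-b_j$. Setting $m := \left\lfloor (a_j - b_j + f)/a_1 \right\rfloor$, the task reduces to producing a single element of $(x^{a_j}R):\m^{m+1}$ that is not integral over $x^{a_j}R$. Since the chain $\{(x^{a_j}R):\m^k\}$ is non-decreasing in $k$, the existence of such an element forces $g(x^{a_j}) \le m$.

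The natural candidate is $x^{b_j}$. Since $b_j \in G$ and $b_j < a_j$, Remark~\ref{1.4}(7) gives $\overline{x^{a_j}R} = (x^e : e \in G,\ e \ge a_j)R$, which does not contain $x^{b_j}$. Thus the only thing left to check is the colon condition $x^{b_j}\m^{m+1} \subseteq x^{a_j}R$.

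This I would verify on monomial generators: $\m^{m+1}$ is generated by the monomials $x^{e_1 a_1 + \cdots + e_d a_d}$ with $e_i \ge 0$ and $\sum_i e_i = m+1$, each of whose exponents is at least $(m+1)a_1$. The defining inequality $m \le (a_j - b_j + f)/a_1 < m+1$ of the floor forces $(m+1)a_1 \ge a_j - b_j + f + 1$, and consequently
\[
b_j + \sum_i e_i a_i - a_j ~\ge~ b_j + (m+1)a_1 - a_j ~\ge~ f + 1.
\]
Since every integer strictly greater than $f$ lies in $G$, this quotient exponent is in $G$, and therefore $x^{b_j + \sum_i e_i a_i} \in x^{a_j}R$, giving the required containment.

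There is no real obstacle beyond selecting the right witness: the entire content of the bound is encoded in the inequality $(m+1)a_1 \ge a_j - b_j + f + 1$, which is engineered precisely so that the smallest exponent appearing in $x^{b_j}\m^{m+1}$, namely $b_j + (m+1)a_1$, is pushed past $a_j + f$ and hence into the translate $a_j + G_{>f}$.
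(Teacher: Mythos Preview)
Your argument is correct and is essentially the paper's own proof: both set $m=\left\lfloor (a_j-b_j+f)/a_1\right\rfloor$, exhibit $x^{b_j}$ as an element of $(x^{a_j}):\m^{m+1}$ not integral over $(x^{a_j})$, and verify the colon condition via the inequality $b_j+(m+1)a_1>a_j+f$ on monomial generators. The only cosmetic difference is that you invoke Remark~\ref{1.4}(7) explicitly for the non-integrality of $x^{b_j}$, whereas the paper simply notes $b_j<a_j$.
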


\begin{proof}
Set $b = \lfloor\frac{a_j - b_j + f}{a_1}\rfloor$. We prove  that
$(x^{a_j}) : \m^{b+1}$ contains $x^{b_j}$. Let  $c_i \in \mathbb N$
be such that  $\sum_{i=1}^d c_i = b+1$. Then
$$
{b_j} + \sum_{i=1}^d a_i c_i \ge b_j + \sum_{i=1}^d a_1 c_i = {b_j}
+ a_1 (b+1) > b_j + (a_j - b_j + f) = a_j + f.
$$
Since this inequality is strict, $b_j + \sum_{i=1}^d a_i c_i - a_j
\in G$. Therefore
$$x^{b_j} (x^{a_1})^{c_1} \cdots (x^{a_d})^{c_d} \in
x^{a_j}R.
$$
This proves that $(x^{a_j}) : \m^{b+1}$ contains $x^{b_j}$. Since
$b_j < a_j$, the element $x^{b_j}$ is  not integral over $(x^{a_j})$. Thus
$g(x^{a_j}) \le b$.
\end{proof}

The inequality  given in display (\ref{5.11}) may be strict as we
demonstrate in Example~\ref{5.2}.

\begin{example} \label{5.2}
Let $(R,\m)$ be a numerical semigroup ring associated to the
semigroup $G = \langle 9,19,21 \rangle$.  One can compute that the
Frobenius number $f$ of $R$ is $71$. For $j = 3$,
$\left\lfloor\frac{a_j - b_j + f }{ a_1}\right\rfloor =
\left\lfloor\frac{21 - 19 + 71 }{ 9}\right\rfloor = 8$, but
$g(x^{21}) = 6$. However, for $j = 2$, $\left\lfloor\frac{a_j - b_j
+ f}{ a_1}\right\rfloor = \left\lfloor\frac{19 - 18 + 71}{
9}\right\rfloor = 8$ is indeed $g(x^{19})$.
\end{example}

\begin{proposition} \label{5.3}
Let $(R,\m)$ be a numerical semigroup ring associated to the
semigroup $G = \langle a_1,\cdots, a_d \rangle$, and let $f$ denote
the Frobenius number of $R$. Then
\begin{eqnarray} \label{5.33}
g(x^{a_1}) \le \left\lceil\frac{f + a_1 + 1 }{a_2}\right\rceil - 1.
\end{eqnarray}
\end{proposition}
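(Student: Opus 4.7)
The plan is to recognize $(x^{a_1})$ as a reduction of $\m$, reducing the problem to a statement about when a power of $\m$ lies inside $(x^{a_1})$, and then to do a direct exponent calculation.

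First I would verify that $\overline{(x^{a_1})} = \m$. By Remark~\ref{1.4}(7), $\overline{(x^{a_1})} = (x^e : e \in G,\ e \ge a_1)R = x^{a_1}V \cap R$. Since $a_1$ is the smallest positive element of $G$, every nonzero non-unit of $R$ has valuation in $G$ that is at least $a_1$, so $\m = \{r \in R : v(r) \ge a_1\} = x^{a_1}V \cap R$. Hence $x^{a_1}R$ is a reduction of $\m$.

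Next, I would invoke Remark~\ref{3.3}: since $(x^{a_1})$ is a reduction of $\m$, the Goto number equals the index of nilpotency, namely
\[
g(x^{a_1}) \;=\; \min\bigl\{\,i \;:\; \m^{i+1} \subseteq (x^{a_1})\,\bigr\}.
\]
Thus it suffices to prove $\m^{N} \subseteq (x^{a_1})$ for $N = \lceil (f + a_1 + 1)/a_2\rceil$, which will give $g(x^{a_1}) \le N-1$.

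To verify this containment, I would check it on monomial generators. Every generator of $\m^{N}$ has the form $x^{\alpha}$ with $\alpha = \sum_{i=1}^d c_i a_i$ and $\sum c_i = N$. If $c_1 \ge 1$, then $\alpha - a_1 = (c_1-1)a_1 + \sum_{i \ge 2} c_i a_i \in G$, so $x^\alpha \in (x^{a_1})R$. If $c_1 = 0$, then $\alpha = \sum_{i \ge 2} c_i a_i \ge a_2 \sum_{i\ge 2}c_i = N a_2 \ge f + a_1 + 1$, so $\alpha - a_1 \ge f + 1$, which lies in $G$ because every integer exceeding the Frobenius number $f$ belongs to $G$. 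Thus $x^\alpha \in (x^{a_1})R$ in this case as well.

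The only subtle point is the case $c_1 = 0$, where the ceiling in $N$ is exactly calibrated so that $Na_2 \ge f + a_1 + 1$; once this is in place, the bound follows immediately from the interpretation of $g(x^{a_1})$ as an index of nilpotency.
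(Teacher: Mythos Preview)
Your proof is correct and takes essentially the same approach as the paper: both arguments reduce to showing that $\m^{N} \subseteq (x^{a_1})$ for $N = \lceil (f+a_1+1)/a_2 \rceil$ (equivalently, $1 \in (x^{a_1}):\m^N$) via the identical exponent calculation on monomial generators, splitting into the cases $c_1 \ge 1$ and $c_1 = 0$. The only difference is cosmetic: you explicitly invoke Remark~\ref{3.3} and the reduction property of $(x^{a_1})$ to frame the problem as an index-of-nilpotency computation, whereas the paper simply notes that exhibiting $1$ in $(x^{a_1}):\m^N$ suffices.
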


\begin{proof}
It suffices to prove that $1 \in (x^{a_1}) : \m^{\lceil\frac{f + a_1
+ 1}{a_2}\rceil}$, and for this it suffices to prove that whenever
$c_i \in \mathbb N$ and $\sum_i c_i = \lceil\frac{f + a_1 + 1}{
a_2}\rceil$, then $\sum_i c_i a_i - a_1 \in G$. In proving this, we
may assume that  $c_1 = 0$. Then
$$\sum_i c_i a_i - a_1 ~~ \ge ~~ \sum_i
c_i a_2 - a_1~~ \ge ~~ f + a_1 + 1 - a_1~~ > ~~ f.
$$
This completes the proof of Proposition \ref{5.3}.
\end{proof}

In Example \ref{5.2}, where $G = \langle 9,19,21 \rangle$, the
inequality in display (\ref{5.33}) is an equality since  $\lceil\frac{f +
a_1 + 1 }{ a_2}\rceil - 1 = 4 = g(x^9)$. However, if $G = \langle
5, 6, 13 \rangle$ is the value semigroup of $R$, then $f = 14$ and
$\lceil\frac{f + a_1 + 1 }{ a_2}\rceil - 1 = 3
> 2 = g(x^5)$.

Concerning upper bounds for the Goto number of monomial parameter
ideals, as observed in Proposition \ref{4.31}, we have
\begin{eqnarray*}
\rho ~~:=~~\sup \{g(x^e) ~|~ e \in G\}~~ = ~~\max\left\lbrace g(x^{a_1}), \ldots,
g(x^{a_d})\right\rbrace,
\end{eqnarray*}
and Propositions \ref{5.1} and \ref{5.3} imply that
\begin{eqnarray} \label{5.35}
\rho  \le \max\left\lbrace \left\lceil\frac{f
+ a_1 + 1 }{a_2}\right\rceil - 1, \left\lfloor\frac{a_2 - b_2 + f
}{a_1}\right\rfloor, \ldots, \left\lfloor\frac{a_d - b_d +
f}{a_1}\right\rfloor \right\rbrace,
\end{eqnarray}
where $b_j$ is the largest element of $G$ that is strictly smaller
than $a_j$, for each $j$ with $2 \le j \le d$. The  maximum in display (\ref{5.35}) is
at most $1 + \frac{f }{ a_1}$, because
$$
b_i \in \{a_i - a_1, a_i - a_1 + 1, \ldots, a_i-1\}.
$$

The upper bound given in display (\ref{5.35}) for the Goto numbers
of monomial parameter ideals may fail to be sharp as we demonstrate
in Example \ref{5.4}.

\begin{example} \label{5.4}
Let $(R,\m)$ be a numerical semigroup ring associated to the
semigroup $G = \langle 4,7,9 \rangle$.  Then  $\rho = \max\{ g(x^4),
g(x^7), g(x^9) \} = 2$. However,  the Frobenius number $f = 10$ and
$$
\max\left\lbrace \left\lceil\frac{f + a_1 + 1 }{a_2}\right\rceil -
1, \left\lfloor\frac{a_2 - b_2 + f }{ a_1}\right\rfloor,
\left\lfloor\frac{a_3 - b_3 + f }{ a_1}\right\rfloor \right\rbrace =
3.
$$
\end{example}

Theorem \ref{5.5} shows  that the inequalities in Propositions
\ref{5.1} and \ref{5.3} are equalities if $d = 2$. We use the
well-known fact that if $G = \langle a_1, a_2 \rangle$, then the
Frobenius number $f = a_1a_2 - a_1 - a_2$, cf.
\cite[Example~12.2.1]{SHbk}.

\begin{theorem} \label{5.5}
Let $(R,\m)$ be a numerical semigroup ring associated to the
semigroup  ~$G = \langle a_1, a_2 \rangle$, then
\begin{eqnarray*}
g(x^{a_1}) &= &\left\lceil\frac{f + a_1 + 1 }{a_2}\right\rceil - 1 = a_1 - 1 \\
             &\le & g(x^{a_2}) = \left\lfloor\frac{a_2 - b_2 + f }{
a_1}\right\rfloor = a_2 - 1 - \left\lfloor\frac{a_2 - 1 }{
a_1}\right\rfloor.
\end{eqnarray*}
\end{theorem}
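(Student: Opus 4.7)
The plan is to rely on the upper bounds already supplied by Propositions~\ref{5.3} and~\ref{5.1} and to prove matching lower bounds, using only the two-generator structure of $G$ and Sylvester's formula $f = a_1 a_2 - a_1 - a_2$. First I would simplify the closed-form expressions. Since the elements of $G$ strictly below $a_2$ are exactly the nonnegative multiples of $a_1$, one has $b_2 = q a_1$ where $q := \lfloor (a_2-1)/a_1 \rfloor$, and direct computation gives $\lceil (f + a_1 + 1)/a_2 \rceil - 1 = \lceil a_1 - 1 + 1/a_2 \rceil - 1 = a_1 - 1$ and $(a_2 - b_2 + f)/a_1 = a_2 - 1 - q$, already an integer. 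Propositions~\ref{5.3} and~\ref{5.1} then read $g(x^{a_1}) \le a_1 - 1$ and $g(x^{a_2}) \le a_2 - 1 - q$.

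For $g(x^{a_1}) \ge a_1 - 1$, Remark~\ref{1.4}(7) identifies $\overline{(x^{a_1})}$ with $\{r \in R : v(r) \ge a_1\}$; since $0$ is the only element of $G$ in $[0,a_1)$, any $r \in R \setminus \overline{(x^{a_1})}$ is a unit of $R$. It therefore suffices to show $\m^{a_1-1} \not\subseteq (x^{a_1})$, and for this I would use the generator $x^{(a_1-1)a_2} \in \m^{a_1-1}$: if it lay in $(x^{a_1})$, then $x^{(a_1-1)a_2 - a_1} = x^f$ would lie in $R$, contradicting $f \notin G$.

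For $g(x^{a_2}) \ge t := a_2 - 1 - q$, the analogous reduction shows any $r \in R \setminus \overline{(x^{a_2})}$ has $v(r) = k a_1$ with $0 \le k \le q$. I would test $r$ against the generator $x^{t a_1} \in \m^t$: if $r x^{t a_1} \in (x^{a_2})$, then the quotient $s = r x^{t a_1}/x^{a_2}$ is a nonzero element of $R$ with $v(s) = (k+t) a_1 - a_2 \in G$. The key calculation is that $(k+t) a_1 - a_2 \equiv (a_1-1) a_2 \pmod{a_1}$, and since $\gcd(a_1, a_2) = 1$ the set $\{0, a_2, 2 a_2, \ldots, (a_1-1) a_2\}$ hits each residue class mod $a_1$ exactly once and supplies the minimum element of $G$ in each class; hence the minimum element of $G$ in this class is $(a_1-1) a_2$. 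But $k + t \le q + t = a_2 - 1 < a_2$ forces $(k+t) a_1 - a_2 < (a_1-1) a_2$, so $(k+t) a_1 - a_2 \notin G$: contradiction.

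The remaining claim $a_1 - 1 \le a_2 - 1 - q$ reduces to $q \le a_2 - a_1$: writing $a_2 = q a_1 + \rho$ with $1 \le \rho \le a_1 - 1$ (available because $\gcd(a_1,a_2) = 1$ and $q \ge 1$), one has $a_2 - q = q(a_1-1) + \rho \ge a_1$. The main obstacle is the residue-class computation in the lower bound for $g(x^{a_2})$: its force is that the only way the leading valuation $(k+t) a_1 - a_2$ of $s$ can belong to $G$ is if $k+t \ge a_2$, which the choice of $t$ rules out precisely because $k \le q$.
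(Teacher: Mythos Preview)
Your proof is correct and follows essentially the same approach as the paper: both invoke Propositions~\ref{5.3} and~\ref{5.1} for the upper bounds, witness the lower bound for $g(x^{a_1})$ via $x^{(a_1-1)a_2}\notin(x^{a_1})$, and witness the lower bound for $g(x^{a_2})$ by testing every $r$ with $v(r)=ka_1<a_2$ against the single element $x^{ta_1}\in\m^t$ and showing $(k+t)a_1-a_2\notin G$. Your Ap\'ery-set formulation of that last step and your one-line verification of $a_1-1\le a_2-1-q$ (via $a_2-q=q(a_1-1)+\rho\ge a_1$) are in fact cleaner than the paper's corresponding computations, but the underlying strategy is the same.
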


\begin{proof}
Using that $f = a_1a_2 - a_1 - a_2$, we see that $\left\lceil\frac{f
+ a_1 + 1 }{a_2}\right\rceil - 1 = a_1 - 1$.  Thus
Proposition~\ref{5.3} implies that $g(x^{a_1}) \le a_1 - 1$. Since
$a_1a_2 - a_1 - a_2 \notin G$, we have $(x^{a_2})^{a_1-1} \notin
x^{a_1}R$. Therefore $(x^{a_1}R:\m^{a_1-1}) \subseteq \m$, and thus
is integral over $x^{a_1}R$. Hence the Goto number $g(x^{a_1}) = a_1
- 1$.

Let $s =  \left\lfloor\frac{a_2 - b_2 + f }{ a_1}\right\rfloor$.  It
is clear that $b_2 = \left\lfloor\frac{a_2 - 1 }{ a_1}\right\rfloor
a_1$. Thus  $s = a_2 - 1 - \left\lfloor\frac{a_2 - 1 }{
a_1}\right\rfloor$. Proposition~\ref{5.1} implies that $g(x^{a_2})
\le s$. If $g(x^{a_2}) < s$, then for some nonnegative integer
$e < \frac{a_2}{a_1}$, we have $ea_1 + sa_1 - a_2 \in G$. Since
$s = a_2 - 1 - \left\lfloor\frac{a_2 - 1 }{
a_1}\right\rfloor$, we have
$$
ea_1 + sa_1 - a_2 = (e - \left\lfloor\frac{a_2 - 1 }{
a_1}\right\rfloor)a_1 + f.
$$
But $( \left\lfloor\frac{a_2 - 1 }{a_1}\right\rfloor -e)a_1 \in G$
implies that $f \in G$, a contradiction. Hence $g(x^{a_2}) = s$.

It remains to prove that $g(x^{a_1}) \le g(x^{a_2})$. Let $r_i \in
[0, a_i-1] \cap \mathbb N$, $1 \le i \le 2$, be such that
$\left\lceil\frac{f + a_1 + 1 }{ a_2}\right\rceil = \frac{f + a_1 +
1 + r_2 }{a_2}$,  and such that $\left\lfloor\frac{a_2 - 1 }{
a_1}\right\rfloor = \frac{a_2 - 1 - r_1 }{a_1}$. Then $g(x^{a_1})
\le g(x^{a_2})$ if and only if $\frac{f + a_1 + 1 + r_2 - a_2 }{
a_2} \le \frac{(a_2-1)a_1 - (a_2 - 1 - r_1) }{ a_1}$, which holds if
and only if $r_2 a_1 - r_1 a_2 \le a_1 a_2^2 + a_1 a_2 - a_2^2 + a_2
- a_1^2 a_2 - a_1$. But $r_2 a_1 - r_1 a_2 \le r_2 a_1 \le (a_2 - 1)
a_1$, and it suffices to prove that $(a_2 - 1) a_1 \le a_1 a_2^2 +
a_1 a_2 - a_2^2 + a_2 - a_1^2 a_2 - a_1$. By assumption $a_1 + 1 \le
a_2$, so that $a_2(a_1^2 - a) \le a_2^2 (a_1 - 1)$, which expands to
the desired inequality. This completes  the proof of
Theorem~\ref{5.5}.
\end{proof}

Now we turn to further characterizations of the eventual stable Goto
number of parameter ideals.

\begin{proposition} \label{5.6}
Let $(R,\m)$ be a numerical semigroup ring associated to the
semigroup $G = \langle a_1,\cdots, a_d \rangle$, and let $f$ denote
the Frobenius number of $R$. Let $t$ be the maximum integer such
that for all $\alpha \in \{1, 2, \ldots, a_1\}$, $\m^t \not
\subseteq x^{\alpha} R$ ($R$-module containment). Then $t =
g(x^{f+a_1+1})$.
\end{proposition}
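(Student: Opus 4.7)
The plan is to prove $t = g(x^e)$ for $e = f + a_1 + 1$ by establishing both inequalities. Since $e \ge f + 1$, we have $\overline{x^e R} = x^e V \cap R = \{r \in R : v(r) \ge e\}$, so the Goto condition $(x^e R : \m^i) \subseteq \overline{x^e R}$ reduces to the statement: every $r \in R$ with $r\m^i \subseteq x^e R$ satisfies $v(r) \ge e$.

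For the easier direction $g(x^e) \le t$, I argue the contrapositive. Suppose $\m^i \subseteq x^\alpha R$ for some $\alpha \in \{1, \ldots, a_1\}$. Take $r := x^{e - \alpha}$. Since $e - \alpha \in [f+1, f+a_1]$, we have $e - \alpha \in G$, so $r \in R$; moreover $v(r) = e - \alpha < e$ and $r\m^i = x^{e-\alpha}\m^i \subseteq x^{e-\alpha} \cdot x^\alpha R = x^e R$. Hence $r \in (x^e R : \m^i) \setminus \overline{x^e R}$, forcing $i > g(x^e)$.

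For the reverse direction $t \le g(x^e)$, I also argue the contrapositive: given $r \in R$ with $v(r) < e$ and $r\m^i \subseteq x^e R$, I produce $\alpha \in \{1, \ldots, a_1\}$ with $\m^i \subseteq x^\alpha R$. The unique integer $k \ge 0$ with $v(r) + ka_1 \in [e - a_1, e - 1]$ lets me replace $r$ by $x^{ka_1}r$, which still lies in $(x^e R : \m^i)$ because $x^{ka_1}r\m^i \subseteq x^{ka_1}\cdot x^e R \subseteq x^e R$. So I may assume $r = u x^{e-\alpha}$ with $\alpha \in \{1, \ldots, a_1\}$ and $u$ a unit of $V$. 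For each monomial generator $x^A$ of $\m^i$ (where $A \ge ia_1 \ge \alpha$), the containment $rx^A \in x^e R$ becomes $u x^{A - \alpha} \in R$.

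The crux is to deduce $x^{A-\alpha} \in R$ from $u x^{A-\alpha} \in R$, despite $u$ being only a unit of $V$ and not necessarily of $R$. Here I exploit that $u x^{A-\alpha}$ is a nonzero element of $R$ of valuation exactly $A - \alpha$, and every nonzero element of $R$ has valuation in $G$, so $A - \alpha \in G$, giving $x^A = x^\alpha \cdot x^{A-\alpha} \in x^\alpha R$. Ranging over all monomial generators yields $\m^i \subseteq x^\alpha R$, contradicting $i \le t$. The main subtlety I anticipate is controlling the non-monomial unit $u$ that arises when $r$ is not a pure monomial, but this is dispatched cleanly by the one-line valuation argument that the valuation of any nonzero element of $R$ lies in $G$.
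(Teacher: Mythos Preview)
Your proof is correct and follows essentially the same approach as the paper's: both directions use the reduction (via multiplication by a power of $x^{a_1}$) to land the test element in the window $[f+1,f+a_1]$, and then translate the colon containment into $\m^i \subseteq x^\alpha R$ for some $\alpha \in \{1,\ldots,a_1\}$. The only difference is that the paper tacitly works with monomial elements $x^l$ in the colon, whereas you explicitly treat a general $r = u x^{e-\alpha}$ with $u$ a unit of $V$ and close the gap with the valuation observation that $v(ux^{A-\alpha}) = A-\alpha \in G$ forces $x^{A-\alpha}\in R$; this makes rigorous a step the paper leaves implicit.
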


\begin{proof}
Since $d > 1$, we have $\m \not \subseteq x^{\alpha} R$ for all
prescribed $\alpha$. Thus there exist positive integers $k$ such
that $\m^k \not \subseteq x^{\alpha} R$. There is an upper bound on
such $k$, for if $k$ is such that  $(k-2)a_1 > f$, then  $\m^k
\subseteq x^{\alpha} R$. Thus an integer  $t$ as in the statement of
Proposition~\ref{5.6}  exists.

If $x^l \in (x^{f + a_1 + 1}) : \m^t$, then  by possibly multiplying
by a power of $x^{a_1}$, we may assume without loss of generality
that $l \ge f + 1$. Then $\m^t \subseteq (x^{a_1+f+1-l})R$, so that
by the definition of $t$, $l \ge f+a_1+1$. This proves that $t \le
g(x^{f+a_1+1})$.

Also by the definition of $t$, there exists $\alpha \in \{1, 2,
\ldots, a_1\}$ such that $\m^{t+1} \subseteq x^{\alpha}R$. Then
$x^{f + a_1 + 1 - \alpha} \cdot \m^{t+1} \subseteq (x^{f + a_1 +
1})R$. Hence  $t + 1 > g(x^{f + a_1 + 1})$.
\end{proof}

\begin{proposition} \label{5.7}
Let $(R,\m)$ be a numerical semigroup ring associated to the
semigroup $G = \langle a_1,\cdots, a_d \rangle$, and let $f$ denote
the Frobenius number of $R$. For each $\alpha \in \{1, \ldots,
a_1\}$, find elements $\beta \in G$ such that $\beta - \alpha \not
\in G$. Among all such $\beta$, fix one for which $x^{\beta}$ has
the largest $\m$-adic order. As $\alpha$ varies, let $t'$ be the
smallest of these orders. Then $t' = g(x^{f+a_1+1})$.
\end{proposition}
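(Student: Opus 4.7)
The plan is to deduce this proposition directly from Proposition~\ref{5.6} by showing that the integer $t'$ defined in the statement equals the integer $t$ appearing in Proposition~\ref{5.6}. Since Proposition~\ref{5.6} already gives $g(x^{f+a_1+1}) = t$, identifying $t$ with $t'$ will complete the proof.

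The bridge between the two definitions is a careful description of $\m^t$ as an $R$-submodule of $V$. Since $\m = (x^{a_1}, \ldots, x^{a_d})R$, the power $\m^t$ is generated as an $R$-module by the set $\{x^\gamma : \gamma \in G \text{ and } \ord(x^\gamma) \geq t\}$, where the $\m$-adic order $\ord(x^\gamma)$ is the maximum of $\sum_i c_i$ over all decompositions $\gamma = \sum_i c_i a_i$ with $c_i \in \mathbb{N}_0$. Because $x^\gamma \in x^\alpha R$ as $R$-submodules of $V$ if and only if $\gamma - \alpha \in G$, the failure of $\m^t \subseteq x^\alpha R$ amounts to the existence of some $\beta \in G$ with $\ord(x^\beta) \geq t$ and $\beta - \alpha \notin G$. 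Writing this translation out carefully is the main technical step.

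For each fixed $\alpha \in \{1, \ldots, a_1\}$, set
\[
t_\alpha ~=~ \max\{\ord(x^\beta) : \beta \in G \text{ and } \beta - \alpha \notin G\}.
\]
This is exactly the integer singled out for $\alpha$ in the statement, and $t' = \min_\alpha t_\alpha$. The maximum is attained because the defining set is non-empty (for example $\beta = f + \alpha \in G$ satisfies $\beta - \alpha = f \notin G$) and is contained in the finite set $\{\beta \in G : \beta \leq f + a_1\}$, since $\beta - \alpha \notin G$ forces $\beta - \alpha \leq f$. With this notation, the condition ``$\m^t \not\subseteq x^\alpha R$ for every $\alpha$'' translates to ``$t \leq t_\alpha$ for every $\alpha$'', so the largest admissible $t$ is precisely $\min_\alpha t_\alpha = t'$. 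Combining with Proposition~\ref{5.6} yields $g(x^{f+a_1+1}) = t = t'$.

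The principal obstacle is the explicit combinatorial description of $\m^t$: one must verify both that every element of $\m^t$ lies in the $R$-span of the monomials $x^\gamma$ with $\ord(x^\gamma) \geq t$, and conversely that every such monomial belongs to $\m^t$ (the latter is what allows us to pass from the ``standard'' generators of $\m^t$ to an arbitrary $\beta \in G$ with high $\m$-adic order). Once this module-theoretic bookkeeping is in place, the rest is the formal identification of the two min-max quantities described above.
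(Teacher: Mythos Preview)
Your proposal is correct and follows essentially the same approach as the paper: both arguments reduce to Proposition~\ref{5.6} via the observation that $\m^t \not\subseteq x^\alpha R$ is equivalent to the existence of $\beta \in G$ with $\ord(x^\beta) \ge t$ and $\beta - \alpha \notin G$. The paper carries this out as two inequalities $t' \ge t$ and $t' \le t$, while you package it as a single min--max translation via the auxiliary quantities $t_\alpha$; these are the same argument in different clothing.
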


\begin{proof}
Observe that the $\beta$ as in the statement exist: $0$ works, or by
minimality of the generators, either $a_1$ or $a_2$ work for each
$\alpha$. Necessarily $\beta - \alpha \le f$. For each $\alpha$, let
$\beta_{\alpha}$ be an element in $G$ such that $\beta_{\alpha} -
\alpha \not \in G$ and such that $x^{\beta_{\alpha}}$ has the
largest $\m$-adic order. Let $t$ be as in Proposition~\ref{5.6}.
Note that $t = g(x^{f+a_1+1})$.

Let $\alpha$ be such that the corresponding $\beta_{\alpha}$ yields
the smallest order, namely $t'$. By assumption, $\m^t \not \subseteq
x^{\alpha} R$. Thus there exists $\gamma \in G$ such that
$x^{\gamma} \in \m^t$ and $\gamma - \alpha \not \in G$. Hence $t'
\ge t$.

Now suppose that $t' > t$. Then there exists $\alpha \in \{1,
\ldots, a_1\}$ such that $\m^{t'} \subseteq x^{\alpha} R$. Hence the
$\m$-adic order of $x^{\beta_{\alpha}}$ is strictly smaller than
$t'$, which is a contradiction. Thus $t' \le t$.
\end{proof}

\begin{corollary} \label{5.8}
With notation as in Proposition~\ref{5.7}, the $\m$-adic order of
the conductor ideal $C = x^{f+1}V$ is less than or equal to  the Goto
number $g(x^{f+a_1+1})$.
\end{corollary}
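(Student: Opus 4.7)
The plan is to exhibit a single element of the conductor whose $\m$-adic order is bounded above by $t' = g(x^{f+a_1+1})$; since $\ord(C) = \min\{\ord(c) : 0 \ne c \in C\}$, this suffices.

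First I would observe that the conductor $C = x^{f+1}V$, viewed as an $R$-ideal, is generated by $\{x^e : e \ge f+1\}$; in particular $x^{f+\alpha} \in C$ for every $\alpha \in \{1, 2, \ldots, a_1\}$. Next, let $\alpha_0 \in \{1, \ldots, a_1\}$ be an index that achieves the minimum defining $t'$ in Proposition~\ref{5.7}, so that $\beta_{\alpha_0} \in G$ satisfies $\beta_{\alpha_0} - \alpha_0 \notin G$ and $\ord(x^{\beta_{\alpha_0}}) = t'$ is the largest $\m$-adic order among all such $\beta$.

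The crux is to choose $e := f + \alpha_0$. Then $e > f$, so $e \in G$, while $e - \alpha_0 = f \notin G$ by the defining property of the Frobenius number. Thus $e$ itself is one of the elements $\beta \in G$ with $\beta - \alpha_0 \notin G$, so by the maximality of $\ord(x^{\beta_{\alpha_0}})$ among such elements we obtain
\[
\ord(x^e) \le \ord(x^{\beta_{\alpha_0}}) = t'.
\]
Since $e \in \{f+1, \ldots, f+a_1\}$, the element $x^e$ lies in $C$, and therefore
\[
\ord(C) \le \ord(x^e) \le t' = g(x^{f+a_1+1}),
\]
the last equality being Proposition~\ref{5.7}. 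This completes the argument.

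There is no real obstacle here; the only subtlety is recognizing that the Frobenius number $f$ itself witnesses the condition ``$\beta - \alpha \notin G$'' for the shifted element $\beta = f + \alpha_0$, which is precisely what links the description of $t'$ in Proposition~\ref{5.7} to the generators of the conductor.
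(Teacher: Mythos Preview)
Your proof is correct and follows essentially the same route as the paper: both arguments hinge on the observation that for each $\alpha \in \{1,\ldots,a_1\}$ the element $f+\alpha$ lies in $G$ while $(f+\alpha)-\alpha = f \notin G$, so $f+\alpha$ is an admissible $\beta$ in the sense of Proposition~\ref{5.7}. The paper states this for all $\alpha$ at once and invokes Proposition~\ref{5.7}, whereas you pick the particular $\alpha_0$ realizing $t'$ and spell out the chain of inequalities; these are minor presentational differences, not different ideas.
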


\begin{proof}
For each $\alpha \in \{1, \ldots, a_1\}$, the element $f + \alpha$ is in $G$ and has the
property that subtracting $\alpha$ gives an element not in $G$. Hence
Proposition~\ref{5.7} implies Corollary~\ref{5.8}.
\end{proof}

With notation as in Corollary~\ref{5.8}, the element $f +
\alpha$ is the largest element of $G$ having the property that
subtracting $\alpha$ gives an element not in $G$. However,
in general,  $x^{f + \alpha}$ need not have the largest possible
$\m$-adic order,  as we demonstrate in Example~\ref{5.9}.

\begin{example} \label{5.9}
Let $(R,\m)$ be a numerical semigroup ring associated to the
semigroup $G = \langle 7, 9, 20 \rangle$.
The Frobenius number  $f = 33$,
and the $\m$-adic order of $x^{33 + 7} = x^{40} = x^{f+a_1}$ is $2$,
whereas $38 \in G$, $38-7 \not \in G$,
and the $\m$-adic order of $x^{38}$ is $3$.
In fact, the Goto number $g(x^{f+a_1+1}) = 3$,
so the $\m$-adic order of the conductor ideal $C = x^{f+1}V$
is here strictly smaller than $g(x^{f+a_1+1})$.
\end{example}

\begin{theorem} \label{5.10}
Let $(R,\m)$ be a numerical semigroup ring associated to the
semigroup $G = \langle a_1, a_2 \rangle$, then
$$
g(x^{f+a_1+1}) = g(x^{a_1}) = \min\{g(x^{a_i}) : i = 1, 2\}.
$$
\end{theorem}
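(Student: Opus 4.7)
Theorem~\ref{5.5} gives $g(x^{a_1}) = a_1 - 1 \le g(x^{a_2})$, so the asserted minimum equals $a_1 - 1$. The upper bound $g(x^{f+a_1+1}) \le a_1 - 1$ is then immediate from Theorem~\ref{4.1}, which identifies $g(x^{f+a_1+1})$ as the minimum Goto number over all parameter ideals of~$R$. All the work is in the reverse inequality $g(x^{f+a_1+1}) \ge a_1 - 1$.

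For this I plan to invoke Proposition~\ref{5.6}: it suffices to exhibit, for each $\alpha \in \{1, \ldots, a_1\}$, a monomial $x^{\beta}$ of $\m^{a_1-1}$ with $\beta - \alpha \notin G$. Since the minimal monomial generators of $\m^{a_1-1}$ are the $x^{\beta}$ with $\beta = (a_1 - 1 - c) a_1 + c a_2$ for $c \in \{0, \ldots, a_1 - 1\}$, the task reduces to choosing, for each $\alpha$, an appropriate~$c$. My candidate: let $\gamma \in \{1, \ldots, a_1\}$ be the unique integer with $\gamma a_2 \equiv \alpha \pmod{a_1}$, and take $c = \gamma - 1$, so $\beta = (a_1 - \gamma) a_1 + (\gamma - 1) a_2$. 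Both exponents are nonnegative and sum to $a_1 - 1$, so $x^{\beta} \in \m^{a_1 - 1}$.

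To verify $\beta - \alpha \notin G$, note that $\gamma a_2 \equiv \alpha \pmod{a_1}$ gives $\beta - \alpha \equiv -a_2 \pmod{a_1}$. Writing any element of $G$ in the form $c_1 a_1 + c_2 a_2$ with $c_1, c_2 \ge 0$ and reducing mod $a_1$ forces $c_2 \ge a_1 - 1$, so the smallest element of $G$ in the residue class of $-a_2$ is $(a_1 - 1) a_2$. Hence $\beta - \alpha \in G$ would force $\beta - \alpha \ge (a_1 - 1) a_2$. But a direct calculation gives
\[
\beta - \alpha - (a_1 - 1) a_2 = (a_1 - \gamma)(a_1 - a_2) - \alpha,
\]
which is strictly negative since $\gamma \le a_1$, $a_2 > a_1$, and $\alpha \ge 1$. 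Hence $\beta - \alpha \notin G$, as required. The main obstacle is choosing the right $\beta$ uniformly in $\alpha$; the guiding observation is that among all residue classes modulo $a_1$, the class of $-a_2$ has the largest representative $(a_1 - 1) a_2 = f + a_1$ in $G$, leaving the most room for a sum of $a_1 - 1$ generators of $\m$ to fall below it after subtracting $\alpha \le a_1$.
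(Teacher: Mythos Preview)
Your proof is correct and follows essentially the same route as the paper: both reduce via Theorem~\ref{5.5} and Proposition~\ref{5.6} to producing, for each $\alpha$, a monomial $x^{\beta}\in\m^{a_1-1}$ with $\beta-\alpha\notin G$, and your choice $\beta=(a_1-\gamma)a_1+(\gamma-1)a_2$ with $\gamma a_2\equiv\alpha\pmod{a_1}$ is exactly the paper's choice under the substitution $r=a_1-\gamma$. The only cosmetic difference is in the verification that $\beta-\alpha\notin G$: you argue via the Ap\'ery element $(a_1-1)a_2$ of the residue class $-a_2\bmod a_1$, while the paper shows directly that $f-(\beta-\alpha)$ is a nonnegative multiple of $a_1$, but these are equivalent since $(a_1-1)a_2=f+a_1$.
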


\begin{proof}
The last equality is proved in Theorem~\ref{5.5}. There it was also
proved that $g(x^{a_1}) = a_1 - 1$. By Theorem~\ref{3.1},
$g(x^{f+a_1+1}) \le g(x^{a_1})$. By Proposition~\ref{5.6}, to prove
Theorem~\ref{5.10},  it suffices to prove for all $\alpha \in \{1,
\ldots, a_1\}$, that $\m^{a_1 - 1} \not \subseteq x^{\alpha} R$. Let
$r \in \{0, \ldots, a_1 - 1\}$ be such that $r a_2 \equiv -\alpha
\mod a_1$. Such $r$ exists because $a_1$ and $a_2$ are relatively
prime. Then $(x^{a_1})^r (x^{a_2})^{a_1-1-r} \in \m^{a_1 - 1}$.
Observe that $a_1a_2 - a_1 - a_2 - (r a_1 + (a_1-1-r) a_2 - \alpha)
= - a_1 + r (a_2 - a_1) + \alpha$ is by construction of $r$ an
integer multiple of $a_1$. If it were negative, then $- a_1 + r (a_2
- a_1) + \alpha \le -a_1$, which is a contradiction. So $a_1a_2 -
a_1 - a_2 - (r a_1 + (a_1-1-r) a_2 - \alpha)$ is a non-negative
multiple of $a_1$. Thus that non-negative multiple of $a_1$ plus $r
a_1 + (a_1-1-r) a_2 - \alpha$ equals $a_1a_2 - a_1 - a_2 = f$, which
is not in $G$. Hence $r a_1 + (a_1-1-r) a_2 - \alpha$ is not in $G$,
which proves that $\m^{a_1 - 1} \not \subseteq x^{\alpha} R$.
\end{proof}

\end{document}